\newtheorem{theorem}{Theorem}[section]
\theoremstyle{definition}
\newtheorem{definition}[theorem]{Definition}
\newtheorem{example}[theorem]{Example}
\newtheorem{proposition}[theorem]{Proposition}
\newtheorem{corollary}[theorem]{Corollary}
\theoremstyle{remark}
\newtheorem{remark}[theorem]{Remark}
\numberwithin{equation}{section}
\begin{document}

\title{ merging the $N$-hyperideals and $J$-hyperideals in one frame
 }

\author{M. Anbarloei}
\address{Department of Mathematics, Faculty of Sciences,
Imam Khomeini International University, Qazvin, Iran.
}

\email{m.anbarloei@sci.ikiu.ac.ir }


\subjclass[2010]{ 16Y20, 16Y99,  20N20}


\keywords{ $n$-ary $\delta(0)$-hyperideal,  $(s,n)$-absorbing $\delta(0)$-hyperideal, weakly $(s,n)$-absorbing $\delta(0)$-hyperideal.}

\begin{abstract}
The notions of $N$-hyperideals and $J$-hyperideals as two new classes of hyperideals were recently defined in the context of  Krasner $(m,n)$-hyperrings.  These concepts are created on the basis of the intersection of all $n$-ary prime hyperideals and the intersection of all maximal  hyperideals, respectively.  Despite being vastly different in many aspects, they share numerous similar properties.
The aim of this research
work is to merge them under one frame called $n$-ary $\delta(0)$-hyperideals where the function $\delta$ assigns to each hyperideal of a Krasner $(m,n)$-hyperring  a hyperideal of the same hyperring. We give various properties of $n$-ary $\delta(0)$-hyperideals  and use them to characterize certain classes of hyperrings such as hyperintegral domains and local hyperrings. Moreover, we introduce the notions of $(s,n)$-absorbing $\delta(0)$-hypereideals  and weakly $(s,n)$-absorbing $\delta(0)$-hypereideals.

\end{abstract}
\maketitle
\section{Introduction}
Although two of the most important structures
in commutative algebra, namely prime and primary ideals, differ significantly in various aspects, they share   several similar properties. The question was whether a unified approach to studying these two structures is possible. In \cite{bmb2}, 
 Dongsheng presented the concept of $\delta$-primary ideals  unifing  the prime and primary ideals under one frame in a commutative ring. Furthermore, Fahid and Dongsheng put two concepts of 2-absorbing ideals and 2-absorbing primary ideals in one frame called 2-absorbing $\delta$-primary ideals in \cite{bmb3}.

Hyperstructures are algebraic structures that have at least one multi-valued operation, known as a hyperoperation. The idea of the algebraic hyperstructures  goes back to
Marty$^,$s research work on hypergroups presented during the $8^{th}$ Congress of the Scandinavian Mathematicians in 1934. Since then,
numerous papers and books concerning this topic have
been written. They can be seen in  \cite {s2, s3, davvaz1, davvaz2, s4, jian}. 
$n$-ary hypergroups as an extension of the notion of a hypergroup in the sense of Marty were defined in \cite{s9}. For more study on
$n$-ary structures refer to \cite{l1, l2, l3}.
The notion of $(m,n)$-hyperrings was presented by Mirvakili and Davvaz  in \cite{cons}. Krasner $(m,n)$-hyperrings as a subclass of $(m,n)$-hyperrings were given in \cite{d1}. Some review of
the hyperrings can be found in \cite{asadi, ma, d1, nour,  rev1}.
Ameri and Norouzi defined $n$-ary prime and $n$-ary primary hyperideals in Krasner $(m, n)$-hyperring in \cite{sorc1} and Hila et al. generalized these concepts and studied $(k,n)$-absorbing and $(k,n)$-absorbing primary hyperideals in  \cite{rev2}. Also, Davvaz et al. defined weakly   $(k,n)$-absorbing and weakly $(k,n)$-absorbing primary hyperideals in \cite{www}. As a recent study, the concept of  $N$-hyperideals was introduced in \cite{mah6} and  some properties of them  have been  investigated analogous with prime hyperideals. Let $G$ be  a commutative Krasner $(m,n)$-hyperring. A hyperideal $A$ of  $G$ is called an $n$-ary $N$-hyperideal if  $g(x_1^n)$ is in $A$ for $x_1^n \in G$ such that $x_i$ is not in the intersection of the all $n$-ary prime hyperideals of $G$ for some $i \in \{1,\cdots,n\}$, then $g(x_1^{i-1},1_G,x_{i+1}^n)$ is in $A$. The notion of $J$-hyperideals as an extension of $N$-hyperideals was defined in \cite{mah4}. A hyperideal $A$ of  $G$ refers to an $n$-ary $J$-hyperideal if  $g(x_1^n) \in A$ for $x_1^n \in G$ such that $x_i$ is not in the intersection of the all maximal hyperideals of $G$ for some $i \in \{1,\cdots,n\}$, then $g(x_1^{i-1},1_G,x_{i+1}^n) \in A$. The two structures have been treated differently  and their properties were independently proven in the mentioned references. Despite their differences in many aspects, they share several similar properties. It is therefore essential to explore the possibility of a unified approach to studying these two structures.

In this paper, we aim to introduce the notion of $\delta(0)$-hyperideals unifying the $N$-hyperideals and $J$-hyperideals under one frame. This method effectively demonstrates the similarities and relationships between the two structures. The paper is orgnized as follows. In Section 2, we recall  the necessary background, and we fix notation. In Section 3, we define $n$-ary $\delta(0)$-hyperideals as a frame containing the concepts of $N$-hyperideals and $J$-hyperideals. However, Example \ref{mesal} shows that an $n$-ary $\delta(0)$-hyperideal may not be an $n$-ary  $N$-hypeideal. We give equivalent characterizations for an $n$-ary $\delta(0)$-hyperideals in Theorem \ref{1/7}. It is shown that an $n$-ary $\delta(0)$-hyperideal of a Krasner $(m,n)$-hyperring is contained in $\delta(0)$ in  Theorem \ref{1}.  We conclude that if $G$ admits an $n$-ary $\delta(0)$-hyperideal, then there exists an $n$-ary $\delta(0)$-hyperideal of $G$ such that there is no $\delta(0)$-hyperideal containing it in Theorem \ref{admits}. Moreover, we
examine  $n$-ary $\delta(0)$-hyperideals under various contexts of constructions such as cartesian products, homomorphic images,  localizations. The penultimate section is dedicated to the notion of $(s,n)$-absorbing $\delta(0)$-hyperideals. Some properties concerning it are investigated. The final section deals with issues surrounding the notion of weakly $(s,n)$-absorbing $\delta(0)$-hyperideals as an extension of the $(s,n)$-absorbing $\delta(0)$-hyperideals. We finish this section with  a copy of Nakayama,s lemma for a kind of weakly $(s,n)$-absorbing
$\delta(0)$-hypereideals.
In this paper, all the hyperrings used are commutative Krasner $(m,n)$-hyperrings with scalar identity.
\section{Preliminaries}
In this section, we give deﬁnitions and notations that will be used in this paper.

An $n$-ary hyperoperation $``f"$ on nonempty set $G$ is a mapping of $G^n$ into the family of all non-empty subsets of $G$. If $``f"$ is an $n$-ary hyperoperation on $G$, then $(G, f)$ is called an $n$-ary hypergroupoid. We can extend the $n$-ary hyperoperation on $G$ to non-empty subsets of $G$ as follows. For $G_1,\cdots, G_n \subseteq G$, then
\[f(G^n_1) = f(G_1,..., G_n) = \bigcup \{f(a_1,\cdots,a_n) \ \vert \ a_i \in G_i, i = 1,..., n \}.\]

Let us use the notation $a^j_i$ instead of   the sequence $a_i, a_{i+1},..., a_j$. Then we have $f(a_1,\cdots, a_i, b_{i+1},\cdots, b_j, c_{j+1},\cdots, c_n)= f(a^i_1, b^j_{i+1},c^n_{j+1})$ and  $f(a_1,\cdots, a_i, \underbrace{b,\cdots,b}_{j-i},\break  c_{j+1},\cdots, c_n)=f(a^i_1, b^{(j-i)}, c^n_{j+1})$ The above notation is the  empty symbol where $j< i$. Let $f$ be an $n$-ary hyperoperation.  Then $r$-ary hyperoperation $f_{(l)}$ for $r = l(n- 1) + 1$ is given by $f_{(l)}(a_1^r) = \underbrace{f(f(..., f(f}_l(a^n _1), a_{n+1}^{2n -1}),...), a_{r-n+1}^{r})$. An $n$-ary hypergroupoid $(G, f)$ is commutative if $f(a_1^n) = f(a_{\sigma(1)}^{\sigma(n)})$ for all $a_1^n \in G $ and $ \sigma \in \mathbb{S}_n$.
An $n$-ary semihypergroup is an $n$-ary hypergroupoid $(G, f)$, which is associative, that is $f(a^{i-1}_1, f(a_i^{n+i-1}), a^{2n-1}_{n+i}) = f(a^{j-1}_1, f(a_j^{n+j-1}), a_{n+j}^{2n-1})$  for  $1 \leq i < j \leq n$ and  $a_1^{2n-1} \in G$. If the equation $a \in f(a_1^{i-1}, x_i, a_{ i+1}^n)$ in an $n$-ary hypergroupoid $(G, f)$ has a solution $x_i \in G$
for all  $a_1^{i-1}, a_{ i+1}^n,a \in G$ and $i \in \{1,\cdots,n\}$, then $(G,f)$ is called an $n$-ary quasihypergroup. An  $n$-ary hypergroup is an $n$-ary semihypergroup that is an $n$-ary quasihypergroup. A non-empty subset $H$ of an $n$-ary hypergroup $G$ is called
an $n$-ary subhypergroup of $G$ if $(H,f)$ is an $n$-ary hypergroup \cite{d1}.

\begin{definition}
\cite{d1} An algebraic hyperstructure $(G, f, g)$, or simply $G$, is said to be a Krasner $(m, n)$-hyperring  if it satisfies the following axioms:
\begin{itemize} 
\item[\rm{(1)}]~ $(G, f$) is a canonical $m$-ary hypergroup, that is 
\begin{itemize} 
\item[\rm{(i)}]~there exists a unique $e \in G$, such that for each $a \in G, f(a, e^{(m-1)}) = \{a\}$;
\item[\rm{(ii)}]~for all $a \in G$ there exists a unique $a^{-1} \in G$ with  $e \in f(a, a^{-1}, e^{(m-2)})$;
\item[\rm{(iii)}]~if $a \in f(a^m _1)$, then  $a_i \in f(a, a^{-1},..., a^{-1}_{ i-1}, x^{-1}_ {i+1},..., a^{-1}_ m)$ for all $i\in \{1,\cdots,m\}$.
\end{itemize} 
\item[\rm{(2)}]~ $(G, g)$ is a $n$-ary semigroup;
\item[\rm{(3)}]~
$g(a^{i-1}_1, f(b^m _1 ), a^n _{i+1}) = f(g(a^{i-1}_1, b_1, a^n_{ i+1}),..., g(a^{i-1}_1, b_m, a^n_{ i+1}))$ for every $a^{i-1}_1 , a^n_{ i+1}, b^m_ 1 \in G$, and $i \in \{1,\cdots,n\}$;
\item[\rm{(4)}]~  $g(0, a^n _2) = g(a_2, 0, a^n _3) = ... = g(a^n_ 2, 0) = 0$ for all $a^n_ 2 \in G$.
\end{itemize} 
\end{definition}
We assume throughout this paper that $G$ is a commutative Krasner $(m,n)$-hyperring with scalar identity $1_G$, that is $g(a,1_G^{(n-2)})=a$ for all $a \in G$. 

If $(R, f, g)$ is a Krasner $(m, n)$-hyperring such that $\varnothing \neq  R \subseteq G$, then  $R$ is called a subhyperring of $G$. If $\varnothing \neq A \subseteq G$ such that $(A, f)$ is an $m$-ary subhypergroup
of $(G, f)$ and $g(a^{i-1}_1, A, a_{i+1}^n) \subseteq I$  for  $a^n _1 \in R$, $i \in \{1,\cdots,n\}$, then $A$ is said to be  a hyperideal of $G$.
Let $A$ be a hyperideal of $G$. Then the set $G/A=\{f(a_1^{i-1},A,a_{i+1}^n) \ \vert \ a_1^{i-1},a_{i+1}^n \in G\}$ is a Krasner $(m, n)$-hyperring with $m$-ary hyperoperation and $n$-hyperoperation $f$ and $g$, respectively \cite{sorc1}.

\begin{definition} \cite{sorc1} Let $a \in G$. Then  $\langle a \rangle$ denotes the hyperideal generated by $a$  and defined by $\langle a \rangle=g(G,a,1^{(n-2)})=\{g(r,a,1_G^{(n-2)}) \ \vert \ r \in G\}$.
\end{definition}
\begin{definition} \cite{sorc1} A hyperideal $A$ of $G$ refers to a  maximal hyperideal if  $A \subseteq B \subseteq G$ for every hyperideal $B$ of $G$ implies that $B=A$ or $B=G$.
\end{definition}
$Max(G)$ denotes the set of all maximal hyperideal of $G$. Jacobson radical of  $G$, being denoted by $J(G)$,  is the intersection of all maximal hyperideals of $G$. If $G$ does not have any maximal hyperideal, then we define $J(G)=G$. Moreover, if $G$ has just one maximal hyperideal, then $G$ is called local.
\begin{definition} \cite{sorc1} 
Let  $a \in G$. The element is  invertible if there exists $b \in G$ with $1_G=g(a,b,1_G^{(n-2)})$. 
\end{definition}






\begin{definition} \cite{d1}
Let $(G_1, f_1, g_1)$ and $(G_2, f_2, g_2)$ be two Krasner $(m, n)$-hyperrings. A mapping
$\psi : G_1 \longrightarrow G_2$ is called a homomorphism if for all $x^m _1 \in G_1$ and $y^n_ 1 \in G_1$ we have
\begin{itemize} 
\item[\rm{(i)}]~$\psi(f_1(x_1,..., x_m)) = f_2(\psi(x_1),...,\psi(x_m)),$
\item[\rm{(ii)}]~$\psi(g_1(y_1,..., y_n)) = g_2(\psi(y_1),...,\psi(y_n)), $
\item[\rm{(iii)}]~$\psi(1_{G_1})=1_{G_2}.$
\end{itemize}
\end{definition}







\begin{definition} \cite{16} 
Assume that $M \neq \varnothing$. Then $(M, h, k)$ is an $(m, n)$-hypermodule over $G$,  if 
\begin{itemize} 
\item[\rm{(i)}]~ $(M, h)$ is a canonical $m$-ary hypergroup. 
 \item[\rm{(ii)}]~ The map 

$\hspace{3cm}k:\underbrace{G \times ... \times G}_{n-1} \times M\longrightarrow 
P^*(M)$\\
statisfied the following conditions:
\end{itemize}
\begin{itemize} 
\item[\rm{(1)}]~ 
$k(a_1^{n-1},h(x_1^m))=h(k(a_1^{n-1}
,x_1),...,k(a_1^{n-1}
,x_m))$
\item[\rm{(2)}]~ $k(a_1^{i-1},f(b_1^m),a_{i+1}^{n-1},x)=h(k(a_1^{i-1}
,b_1,r_{i+1}^{n-1},x),...,k(r_1^{i-1}
b_m,r_{i+1}^{n-1},x))$
\item[\rm{(3)}]~  $k(a_1^{i-1},g(a_i^{i+n-1}),a_{i+m}^{n+m-2},x)=
k(a_1^{n-1},k(a_m^{n+m-2},x))$
\item[\rm{(4)}]~$ 0=k(r_1^{i-1},0,r_{i+1}^{n-1},x)$.
\end{itemize} 
\end{definition} 
\section{$n$-ary $\delta(0)$-hyperideals}
A hyperideal expansion $\delta$ of $G$ is a fuction which assigns to each hyperideal $A$ of $G$ a hyperideal $\delta(A)$ of $G$ such that $A \subseteq \delta(A)$, and $B \subseteq C$ for  hyperideals $B, C$ of $G$ implies $\delta(B) \subseteq \delta(C)$. For example, the functions $\delta_0, \delta_1, \delta_G$ and $\delta_M$  from the set of all hyperideals of $G$ to the same set, defined by $\delta_0(A)=A, \delta_1(A)=rad(A),\delta_G(A)=G$ and $\delta_M(A)=\cap_{A \subseteq B \in Max(G)}B$, are hyperideal expansions of $G$. In \cite{mah3}, the $n$-ary prime and $n$-ary primary hyperideals were put in a frame called $n$-ary $\delta$-primary hyperideals. In this section, we introduce and study the notion of $n$-ary $\delta(0)$-hyperideals  in a commutative Krasner $(m,n)$-hyperring $G$ which unify the $n$-ary $N$-hyperideals and $n$-ary $J$-hyperideals. 
\begin{definition} 
Let $A$ be a proper hyperideal of $G$ and $\delta$ a hyperideal expansion of $G$.  $A$ refers to an $n$-ary $\delta(0)$-hyperideal if whenever $x_1^n \in G$ with $g(x_1^n) \in A$  and  $g(x_1^{i-1},1_G,x_{i+1}^n) \notin  \delta(0)$ for some $i \in \{1,\cdots,n\}$, then $x_i \in A$.
\end{definition}
\begin{example} 
 Consider the  hyperring $([\alpha,\infty) \cup \{0\},+,\cdot)$ where $\alpha \geq 1$, $``\cdot"$ is the usual multiplication and $``+"$ is defined by
\[\hspace{3cm}
a + b=
\begin{cases}
[a,\infty) \cup \{0\} & \text{if $a=b\neq 0$.}\\
b + a=\{a\} & \text{if $b=0$},\\
\{\min\{a,b\}\} & \text{if $a \neq 0, b \neq 0$ and $a \neq b$.}\\
\end{cases}\]
Note that $([\alpha,\infty) \cup \{0\},+,\cdot)$ is a Krasner $(m,n)$-hyperring with $f(a_1^m)=\Sigma_{i=1}^ma_i$ and $g(b_1^n)=\Pi_{i=1}^n b_i$ for all $a_1^m,b_1^n \in [\alpha,\infty) \cup \{0\}$.
The hyperideal $0$ is an $n$-ary $\delta_1(0)$-hyperideal of $[\alpha,\infty) \cup \{0\}$.
\end{example} 
The next example shows that $n$-ary $\delta(0)$-hyperideals and $n$-ary $N$-hyperideals are different notions.
\begin{example} \label{mesal}
Note that $(\mathbb{Z}/6\mathbb{Z},f,g)$ is a commutative Krasner $(m,n)$-hyperring in which $f$ and $g$ are usual
addition and multiplication, respectively. Consider $\delta(I)=\{x \in \mathbb{Z}/6\mathbb{Z} \ \vert \ g(x,3\mathbb{Z}/6\mathbb{Z},1_{\mathbb{Z}/6\mathbb{Z}}^{(n-2)}) \subseteq I\}$. Let $A=2\mathbb{Z}/6\mathbb{Z}$ for every hyperideal $I$ of $G$. Then $A$ is an $n$-ary $\delta(0_{\mathbb{Z}/6\mathbb{Z}})$-hyperideal. However, $A$ is not an $n$-ary $N$-hyperideal.
\end{example}
Our first theorem gives a characterization of $\delta(0)$-hyperideals.
\begin{theorem} \label{1/7}
Let $A$ be a proper hyperideal of $G$. Then we have the following equivalent statements:
\begin{itemize} 
\item[\rm{(i)}]~ $A$ is an $n$-ary $\delta(0)$-hyperideal of $G$;
\item[\rm{(ii)}]~$A=E_x$ where $E_x=\{y \in G \ \vert \ g(x,y,1^{(n-2)}) \in A\}$ for each $x \notin \delta(0)$;
\item[\rm{(iii)}]~$g(A_1^n) \subseteq A$ for hyperideals $A_1^n$ of $G$ such that  $g(A_1^{i-1},1_G,A_{i+1}^n) \bigcap (G-\delta(0)) \neq \varnothing$ for some $i \in \{1,\cdots,n\}$ implies that $A_i \subseteq A$
\end{itemize} 
\end{theorem}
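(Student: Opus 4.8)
\emph{Plan.} The plan is to prove the cycle of implications $(i)\Rightarrow(ii)\Rightarrow(iii)\Rightarrow(i)$. Throughout I will read a statement of the form ``$g(\cdots)\in A$'' as the containment $g(\cdots)\subseteq A$, and ``$g(\cdots)\notin\delta(0)$'' as the assertion that some element of the set $g(\cdots)$ lies outside $\delta(0)$, as is standard in this setting. The two facts I will lean on repeatedly are that $A$ is a hyperideal (hence absorbs $g$-products), and the scalar identity $g(a,1_G^{(n-1)})=a$, used together with the commutativity and associativity of $g$.

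\emph{$(i)\Rightarrow(ii)$.} Fix $x\notin\delta(0)$. The inclusion $A\subseteq E_x$ is immediate: if $y\in A$ then $g(x,y,1^{(n-2)})\subseteq A$ because $A$ is a hyperideal, so $y\in E_x$. For the reverse inclusion, take $y\in E_x$, so that $g(x,y,1^{(n-2)})\subseteq A$. I will feed the tuple $x_1=x$, $x_2=y$, $x_3=\cdots=x_n=1_G$ into the defining condition: here $g(x_1^n)=g(x,y,1^{(n-2)})\subseteq A$, while the choice of index $i=2$ gives $g(x_1,1_G,x_3^n)=g(x,1_G^{(n-1)})=x\notin\delta(0)$. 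The definition then forces $x_2=y\in A$, whence $E_x\subseteq A$ and equality holds.

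\emph{$(ii)\Rightarrow(iii)$.} Suppose $g(A_1^n)\subseteq A$ and pick $t\in g(A_1^{i-1},1_G,A_{i+1}^n)\cap(G-\delta(0))$, say $t\in g(a_1,\ldots,a_{i-1},1_G,a_{i+1},\ldots,a_n)$ with $a_j\in A_j$ for $j\neq i$. Since $t\notin\delta(0)$, hypothesis $(ii)$ gives $A=E_t$. For an arbitrary $a_i\in A_i$ I will use $g(t,a_i,1^{(n-2)})\subseteq g\bigl(g(a_1^{i-1},1_G,a_{i+1}^n),a_i,1^{(n-2)}\bigr)$ and then collapse the right-hand side. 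This last step is the crux: expanding the double $g$ into a single $(2n-1)$-ary product via $g_{(2)}$, using commutativity to gather the $n-1$ copies of $1_G$ after the $n$ genuine entries, and invoking the scalar identity to absorb them, the right-hand side reduces to $g(a_1^n)\subseteq g(A_1^n)\subseteq A$. Hence $g(t,a_i,1^{(n-2)})\subseteq A$, i.e. $a_i\in E_t=A$; as $a_i\in A_i$ was arbitrary, $A_i\subseteq A$.

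\emph{$(iii)\Rightarrow(i)$, and the main obstacle.} Given $x_1^n$ with $g(x_1^n)\subseteq A$ and some element of $g(x_1^{i-1},1_G,x_{i+1}^n)$ lying outside $\delta(0)$, I will apply $(iii)$ to the principal hyperideals $A_j=\langle x_j\rangle$. A short computation, again absorbing the generators into $g(x_1^n)$ by associativity and the hyperideal property of $A$, shows $g(A_1^n)\subseteq A$; and since $x_j\in\langle x_j\rangle$ (take $r=1_G$ in the definition of $\langle x_j\rangle$), the element witnessing $g(x_1^{i-1},1_G,x_{i+1}^n)\not\subseteq\delta(0)$ also witnesses $g(A_1^{i-1},1_G,A_{i+1}^n)\cap(G-\delta(0))\neq\varnothing$. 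Then $(iii)$ yields $A_i\subseteq A$, so $x_i\in A$, as required. I expect the only genuinely delicate point to be the reduction $g\bigl(g(a_1^{i-1},1_G,a_{i+1}^n),a_i,1^{(n-2)}\bigr)=g(a_1^n)$ in $(ii)\Rightarrow(iii)$ (and its analogue in $(iii)\Rightarrow(i)$): this is where the $n$-ary bookkeeping with $g_{(2)}$, the position of the inserted $1_G$, and the repeated use of the scalar identity must be handled with care, while every other step is a direct appeal to the definitions and the hyperideal property.
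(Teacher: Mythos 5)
Your proof is correct and takes essentially the same route as the paper's: the identical cycle $(i)\Rightarrow(ii)\Rightarrow(iii)\Rightarrow(i)$, with the same key choices at each step (the tuple $(x,y,1_G^{(n-2)})$ with index $i=2$ for $(i)\Rightarrow(ii)$; the element $t=g(a_1^{i-1},1_G,a_{i+1}^n)\notin\delta(0)$ together with the collapse $g(t,a_i,1_G^{(n-2)})=g(a_1^n)\in A$ for $(ii)\Rightarrow(iii)$; and the principal hyperideals $\langle x_j\rangle$ for $(iii)\Rightarrow(i)$). One minor remark: your set-valued reading of $g$-products is unnecessary, since in a Krasner $(m,n)$-hyperring $g$ is a single-valued $n$-ary operation (only $f$ is a hyperoperation), though this convention does no harm to the argument.
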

\begin{proof}
(i) $\Longrightarrow$ (ii) Let $A$ be  an $n$-ary $\delta(0)$-hyperideal of $G$. We always have $A \subseteq E_x$ for each $x \in G$. Suppose that $y \in E_x$ for  $x \notin  \delta(0)$. This means $g(x,y,1^{(n-2)}) \in A$. Since $A$ is an $n$-ary $\delta(0)$-hyperideal of $G$ and $x=g(x,1^{(n-1)}) \notin \delta(0)$, we get the result that $y \in A$ and so $A=E_x$.

(ii) $\Longrightarrow$ (iii) Let $g(A_1^n) \subseteq A$ for hyperideals $A_1^n$ of $G$ and  $g(A_1^{i-1},1_G,A_{i+1}^n) \bigcap (G-\delta(0)) \neq \varnothing$ for some $i \in \{1,\cdots,n\}$. Since $g(A_1^{i-1},1_G,A_{i+1}^n) \bigcap (G-\delta(0)) \neq \varnothing$ for some $i \in \{1,\cdots,n\}$,  there exists $x_j \in A_j$ for all $j \in \{1,\cdots,n\}$ and $j \neq i$ such that $g(x_1^{i-1},1_G,1_{i+1`}^n) \notin \delta(0)$. Since $g(g(x_1^{i-1},1_G,x_{i+1}^n),A_i,1^{(n-2)})=g(x_1^{i-1},A_i,x_{i+1}^n)\subseteq A$, we have $A_i \subseteq E_{g(x_1^{i-1},1_G,x_{i+1}^n)}$. Now, by (ii) we get $A_i \subseteq A$.

(iii) $\Longrightarrow$ (i) Let $g(x_1^n) \in A$ for $x_1^n \in G$ such that $g(x_1^{i-1},1_G,x_{i+1}^n) \notin \delta(0)$. Put $A_i=\langle x_i \rangle$ for each $i \in \{1,\cdots,n\}$.  Since $g(x_1^n) \in g(A_1^n) \subseteq A$ and $g(x_1^{i-1},1_G,x_{i+1}^n) \in g(A_1^{i-1},1_G,A_{i+1}^n) \bigcap (G-\delta(0))$, we get the result that $A_i \subseteq A$. This means that $x_i \in A$. Hence $A$ is an $n$-ary $\delta(0)$-hyperideals, as desired.
\end{proof}
\begin{theorem}
Assume that $A_1^{n-1}, A$ and $B$  are some hyperideals of $G$ such that $g(A_1^{n-1},1_G) \bigcap (G-\delta(0)) \neq \varnothing$. Then:
\begin{itemize} 
\item[\rm{(i)}]~ If $A$ and $B$ are $n$-ary $\delta(0)$-hyperideals of $G$ that $g(A_1^{n-1},A)=g(A_1^{n-1},B)$, then $A=B$.
\item[\rm{(ii)}]~ If $g(A_1^{n-1},A)$ is an $n$-ary $\delta(0)$-hyperideal of $G$, then $g(A_1^{n-1},A)=A$.
\end{itemize} 
\end{theorem}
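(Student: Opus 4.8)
The plan is to reduce both parts entirely to the product characterization in Theorem \ref{1/7}(iii), using the standing hypothesis $g(A_1^{n-1},1_G) \cap (G-\delta(0)) \neq \varnothing$ as the exact trigger for the index $i=n$. The key preliminary observation, which I would use repeatedly, is that for any hyperideal $C$ of $G$ the absorption property of a hyperideal in the last coordinate gives $g(a_1,\ldots,a_{n-1},C) \subseteq C$; taking the union over $a_j \in A_j$ yields $g(A_1^{n-1},C) \subseteq C$.

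For part (i), I would first establish $A \subseteq B$. From the hypothesis $g(A_1^{n-1},A)=g(A_1^{n-1},B)$ together with $g(A_1^{n-1},B) \subseteq B$ (the preliminary observation with $C=B$), I obtain $g(A_1^{n-1},A) \subseteq B$. Now I apply Theorem \ref{1/7}(iii) to the $n$-ary $\delta(0)$-hyperideal $B$, taking the $n$ hyperideals to be $A_1,\ldots,A_{n-1},A$ and the index $i=n$: the product $g(A_1,\ldots,A_{n-1},A)$ lies in $B$, and $g(A_1^{n-1},1_G) \cap (G-\delta(0)) \neq \varnothing$ is precisely the standing hypothesis, so the conclusion $A \subseteq B$ follows at once. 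Since both $A$ and $B$ are $\delta(0)$-hyperideals and the product equality is symmetric, the identical argument with the roles of $A$ and $B$ exchanged gives $B \subseteq A$, hence $A=B$.

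For part (ii), write $C=g(A_1^{n-1},A)$. One inclusion, $C \subseteq A$, is immediate from the preliminary observation with $C$ replaced by $A$. For the reverse inclusion I would invoke Theorem \ref{1/7}(iii) for the $\delta(0)$-hyperideal $C$ itself, again with the hyperideals $A_1,\ldots,A_{n-1},A$ and $i=n$: here $g(A_1,\ldots,A_{n-1},A)=C \subseteq C$ holds trivially, the hypothesis supplies $g(A_1^{n-1},1_G) \cap (G-\delta(0)) \neq \varnothing$, and therefore $A \subseteq C$. Combining the two inclusions yields $g(A_1^{n-1},A)=A$, as claimed.

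I do not anticipate a genuine obstacle: once characterization (iii) is available, the whole argument is a matter of feeding in the correct $n$-tuple of hyperideals and selecting $i=n$. The only points requiring mild care are verifying that $g(A_1^{n-1},C) \subseteq C$ for a hyperideal $C$ (that is, that the absorption property applies in the last coordinate, which follows since $G$ is commutative) and observing that the standing hypothesis on $g(A_1^{n-1},1_G)$ matches the nonempty-intersection condition of (iii) with $i=n$ verbatim, so that no separate estimate is needed.
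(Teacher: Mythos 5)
Your proposal is correct and follows essentially the same route as the paper's own proof: both parts reduce to Theorem \ref{1/7}(iii) applied with the $n$-tuple $A_1,\ldots,A_{n-1},A$ (or $B$) at index $i=n$, using the absorption property $g(A_1^{n-1},C)\subseteq C$ and the standing hypothesis $g(A_1^{n-1},1_G)\cap(G-\delta(0))\neq\varnothing$ as the cancellation trigger. The only cosmetic difference is the order in which you derive the two inclusions in part (i); the substance is identical.
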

\begin{proof}
(i) Suppose that  $g(A_1^{n-1},A)=g(A_1^{n-1},B)$ for hyperideals $A_1^{n-1},A$ and $B$ of $G$. Then we have $g(A_1^{n-1},B) \subseteq A$. Since $A$ is an  $n$-ary $\delta(0)$-hyperideal of $G$ and $g(A_1^{n-1},1_G) \bigcap (G-\delta(0)) \neq \varnothing$, we get the result that $B \subseteq A$ by Theorem \ref{1/7}. Similarly, we obtain $A \subseteq B$ and so $A=B$.

(ii) Let $g(A_1^{n-1},A)$ be an $n$-ary $\delta(0)$-hyperideal of $G$ for hyperideals $A_1^{n-1},A$ of $G$. Since $g(A_1^{n-1},A) \subseteq g(A_1^{n-1},A)$ and $g(A_1^{n-1},1_G) \bigcap (G-\delta(0)) \neq \varnothing$, we have $A \subseteq g(A_1^{n-1},A)$. Since $g(A_1^{n-1},A) \subseteq A$, we get $g(A_1^{n-1},A)=A$.
\end{proof}
\begin{proposition}
Let $\delta$ and $\gamma$ be two hyperideal expansions of $G$ such that $\gamma(0) \subseteq \delta(0)$.  Then every  $n$-ary $\gamma(0)$-hyperideal of $G$ is an $n$-ary $\delta(0)$-hyperideal.
\end{proposition}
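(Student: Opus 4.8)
The plan is to apply the definition of an $n$-ary $\delta(0)$-hyperideal directly, exploiting the fact that the hypothesis $\gamma(0) \subseteq \delta(0)$ reverses under complementation to give $G - \delta(0) \subseteq G - \gamma(0)$. Concretely, let $A$ be an $n$-ary $\gamma(0)$-hyperideal of $G$. Such a hyperideal is by definition a proper hyperideal, so $A$ is already a proper hyperideal of $G$, and it remains only to verify the absorption condition appearing in the definition of an $n$-ary $\delta(0)$-hyperideal.

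First I would fix arbitrary $x_1^n \in G$ with $g(x_1^n) \in A$ and $g(x_1^{i-1},1_G,x_{i+1}^n) \notin \delta(0)$ for some $i \in \{1,\cdots,n\}$; the goal is to deduce $x_i \in A$. The key step is the observation that $g(x_1^{i-1},1_G,x_{i+1}^n) \notin \delta(0)$ forces $g(x_1^{i-1},1_G,x_{i+1}^n) \notin \gamma(0)$. Indeed, if this element were in $\gamma(0)$, then by $\gamma(0) \subseteq \delta(0)$ it would also lie in $\delta(0)$, contradicting the hypothesis. Equivalently, $g(x_1^{i-1},1_G,x_{i+1}^n) \in G - \delta(0) \subseteq G - \gamma(0)$.

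Having thus upgraded the premise to exactly the condition required by the $\gamma(0)$-absorption property, I would invoke that $A$ is an $n$-ary $\gamma(0)$-hyperideal together with $g(x_1^n) \in A$ to conclude $x_i \in A$. Since $i$ was arbitrary, this is precisely what is needed for $A$ to be an $n$-ary $\delta(0)$-hyperideal.

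There is no genuine obstacle here: the whole content is the elementary reversal of inclusions under complementation, and no appeal to the structural results such as Theorem \ref{1/7} is required. The only point worth stating carefully is the direction of the inclusion, since interchanging $G - \delta(0)$ with $G - \gamma(0)$ would break the argument; everything else is a direct unwinding of the definitions.
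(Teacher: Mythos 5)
Your proof is correct: the containment $\gamma(0) \subseteq \delta(0)$ indeed means that the hypothesis $g(x_1^{i-1},1_G,x_{i+1}^n) \notin \delta(0)$ implies $g(x_1^{i-1},1_G,x_{i+1}^n) \notin \gamma(0)$, so the $\gamma(0)$-absorption property of $A$ yields $x_i \in A$ exactly as required. The paper states this proposition without proof, evidently regarding it as immediate from the definitions, and your argument is precisely that intended direct unwinding.
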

\begin{theorem} \label{1/1}
Assume that $R$ is a non-empty subset of $G$. If $A$ is an $n$-ary $\delta(0)$-hyperideal of $G$ such that $R \nsubseteq A$, then $E_R=\{x \in G \ \vert \ g(x,R,1_G^{(n-2)}) \subseteq A \}$ is an $n$-ary $\delta(0)$-hyperideal of $G$.
\end{theorem}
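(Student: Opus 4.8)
The plan is to first check that $E_R$ is a proper hyperideal of $G$, and then to verify the defining implication of an $n$-ary $\delta(0)$-hyperideal directly. For containment, if $a \in A$ and $r \in R$ then $g(a,r,1_G^{(n-2)}) \in g(A,r,1_G^{(n-2)}) \subseteq A$ by the absorption property of the hyperideal $A$, so $A \subseteq E_R$; in particular $E_R \neq \varnothing$. For properness I would invoke the hypothesis $R \nsubseteq A$: choose $r \in R$ with $r \notin A$. If $1_G \in E_R$ then $r = g(1_G,r,1_G^{(n-2)}) \in A$ by the scalar identity and commutativity, a contradiction; hence $1_G \notin E_R$ and $E_R \neq G$.

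To see that $E_R$ is a hyperideal I would argue in the routine way. Closure under $f$: for $x_1^m \in E_R$, $r \in R$ and $w \in f(x_1^m)$, distributivity gives $g(w,r,1_G^{(n-2)}) \in g(f(x_1^m),r,1_G^{(n-2)}) = f(g(x_1,r,1_G^{(n-2)}),\dots,g(x_m,r,1_G^{(n-2)})) \subseteq f(A,\dots,A) \subseteq A$, so $f(x_1^m) \subseteq E_R$; closure under the canonical inverse is similar, since $A$ is an $m$-ary subhypergroup. For absorption, given $x \in E_R$, $a_1^n \in G$ and $r \in R$, associativity gives $g(g(a_1^{i-1},x,a_{i+1}^n),r,1_G^{(n-2)}) = g(a_1^{i-1},g(x,r,1_G^{(n-2)}),a_{i+1}^n) \in A$ because $g(x,r,1_G^{(n-2)}) \in A$ and $A$ absorbs products, whence $g(a_1^{i-1},x,a_{i+1}^n) \in E_R$. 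Thus $E_R$ is a proper hyperideal.

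For the main assertion, suppose $x_1^n \in G$ satisfy $g(x_1^n) \in E_R$ and $g(x_1^{i-1},1_G,x_{i+1}^n) \notin \delta(0)$ for some fixed $i$; I must show $x_i \in E_R$, i.e. $g(x_i,r,1_G^{(n-2)}) \in A$ for every $r \in R$. Fix $r \in R$. Since $g(x_1^n) \in E_R$ we have $g(g(x_1^n),r,1_G^{(n-2)}) \in A$. Using commutativity, associativity of $(G,g)$ and the scalar identity to absorb the copies of $1_G$, this element rearranges as
\[
g(g(x_1^n),r,1_G^{(n-2)}) = g\bigl(x_1^{i-1},\,g(x_i,r,1_G^{(n-2)}),\,x_{i+1}^n\bigr),
\]
so putting $y_i = g(x_i,r,1_G^{(n-2)})$ and $y_j = x_j$ for $j \neq i$ gives $g(y_1^n) \in A$. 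Moreover the complementary product is unchanged: $g(y_1^{i-1},1_G,y_{i+1}^n) = g(x_1^{i-1},1_G,x_{i+1}^n) \notin \delta(0)$. Since $A$ is an $n$-ary $\delta(0)$-hyperideal, its defining implication applied at position $i$ yields $y_i = g(x_i,r,1_G^{(n-2)}) \in A$. As $r \in R$ was arbitrary, $g(x_i,R,1_G^{(n-2)}) \subseteq A$, i.e. $x_i \in E_R$, as required.

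The only genuinely non-routine point is the rearrangement in the displayed identity, which must be justified through the generalized associativity and commutativity of the $n$-ary semigroup $(G,g)$ together with the scalar identity; everything else reduces to the hyperideal axioms and the definition. I expect this bookkeeping to be the step to set up carefully, and it is the same manipulation already used in the proof of Theorem \ref{1/7}. One could alternatively phrase the argument via the pointwise residuals, observing $E_R = \bigcap_{r \in R} E_{\{r\}}$, but since not every $r$ lies outside $\delta(0)$ the per-element reduction above is the cleaner route.
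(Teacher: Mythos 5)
Your proof is correct and takes essentially the same route as the paper: the key step in both is to rewrite $g(g(x_1^n),r,1_G^{(n-2)})$ so that $g(x_i,r,1_G^{(n-2)})$ occupies a single slot while the complementary factor $g(x_1^{i-1},1_G,x_{i+1}^n)\notin\delta(0)$ is unchanged, and then apply the defining property of $A$. The paper dismisses properness (and the hyperideal axioms for $E_R$) as clear where you verify them explicitly, but the substance of the argument is identical.
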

\begin{proof}
It is clear that $E_R \neq G$. Suppose that $g(x_1^n) \in E_R$ for $x_1^n \in G$ such that $g(x_1^{i-1},1_G,x_{i+1}^n) \notin \delta(0)$ for some $i \in \{1,\cdots,n\}$. Then we conclude that  $g(g(x_i,r,1_G^{(n-2)}),g(x_1^{i-1},1_G,x_{i+1}^n),1^{(n-2)})=g(g(x_1^n),r,1^{(n-2)}) \in A$ for each $r \in R$. Since $A$ is an $n$-ary $\delta(0)$-hyperideal and $g(x_1^{i-1},1_G,x_{i+1}^n) \notin \delta(0)$, we obtain $g(x_i,r,1^{(n-2)}) \in A$ which implies $x_i \in E_R$. Thus  $E_R$ is an $n$-ary $\delta(0)$-hyperideal of $G$.
\end{proof}
\begin{theorem}
If $\{A_j\}_{j \in J}$ is a non-empty set of $n$-ary $\delta(0)$-hyperideals of $G$, then so is $\bigcap_{j \in J} A_j.$
\end{theorem}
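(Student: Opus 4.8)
The plan is to verify directly that $A := \bigcap_{j \in J} A_j$ satisfies the defining condition of an $n$-ary $\delta(0)$-hyperideal, exploiting the fact that the expansion value $\delta(0)$ is a single fixed hyperideal of $G$ that does not depend on the index $j$.

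First I would record that $A$ is a proper hyperideal. That the intersection of a family of hyperideals is again a hyperideal is routine: the intersection of $m$-ary subhypergroups is an $m$-ary subhypergroup, and the absorption property $g(a_1^{i-1},A,a_{i+1}^n) \subseteq A$ passes to the intersection because it holds in each $A_j$. Since $J$ is non-empty, fix some $j_0 \in J$; then $A \subseteq A_{j_0}$ and $A_{j_0} \neq G$, so $A \neq G$ and $A$ is proper.

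For the main condition, suppose $x_1^n \in G$ satisfy $g(x_1^n) \in A$ together with $g(x_1^{i-1},1_G,x_{i+1}^n) \notin \delta(0)$ for some $i \in \{1,\cdots,n\}$. The key point is that the hypothesis $g(x_1^{i-1},1_G,x_{i+1}^n) \notin \delta(0)$ refers to the same hyperideal $\delta(0)$ for every member of the family, so it is available simultaneously for each $A_j$. Since $g(x_1^n) \in A = \bigcap_{j \in J} A_j$, we have $g(x_1^n) \in A_j$ for every $j \in J$; applying the $n$-ary $\delta(0)$-hyperideal property of each $A_j$ to this membership and to the condition $g(x_1^{i-1},1_G,x_{i+1}^n) \notin \delta(0)$ yields $x_i \in A_j$ for every $j$. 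Intersecting over $j$ then gives $x_i \in \bigcap_{j \in J} A_j = A$, which is exactly what the definition requires, so $A$ is an $n$-ary $\delta(0)$-hyperideal of $G$.

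I expect no serious obstacle in this argument; the only thing worth flagging is the uniformity of $\delta(0)$ across the whole family, which is precisely what allows the pointwise conclusions $x_i \in A_j$ to be combined into $x_i \in A$. By contrast, had the excluded set varied with the index $j$, this last step would break down, so this is the feature of the definition that the proof genuinely relies on.
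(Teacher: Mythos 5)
Your proof is correct and follows essentially the same route as the paper's: apply the $\delta(0)$-hyperideal property of each $A_j$ to the common hypothesis $g(x_1^{i-1},1_G,x_{i+1}^n) \notin \delta(0)$ (which, as you note, is independent of $j$) to get $x_i \in A_j$ for every $j$, then intersect. Your additional verification that the intersection is a proper hyperideal is a point the paper leaves implicit, but otherwise the arguments coincide.
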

\begin{proof}
Assume that $g(x_1^n) \in \bigcap_{j \in J}A_j$ for $x_1^n \in G$ such that $g(x_1^{i-1},1_G,x_{i+1}^n) \notin \delta(0)$ for some $i \in \{1,\cdots,n\}$.  Since $A_j$ is an $n$-ary $\delta(0)$-hyperideal of $G$ and $g(x_1^n) \in A_j$ for all $j \in J$, we obtain $x_i \in A_j$ which means $x_i \in \bigcap_{j \in J}A_j$.
\end{proof}
Recall from \cite{mah4} that a proper hyperideal $A$ of $G$ is an $n$-ary $J$-hyperideal if $g(x_1^n) \in A$ for $x_1^n$ and $x_i \notin J(G)$ imply that $g(x_1^{i-1},1_G,x_{i-1}^n) \in A$.
\begin{theorem}
Let $\delta(0)$ be a maximal hyperideal of $G$ and $A$ be an $n$-ary $J$-hyperideal of $G$. Then $A$ is an $n$-ary $\delta(0)$-hyperideal.
\end{theorem}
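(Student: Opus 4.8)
The plan is to exploit the fact that a maximal hyperideal contains the Jacobson radical, so that membership ``outside $\delta(0)$'' forces membership ``outside $J(G)$'' for the relevant factors, after which the $J$-hyperideal hypothesis can be applied repeatedly until only $x_i$ survives. First I would record the key inclusion: since $\delta(0)$ is a maximal hyperideal, it is one of the hyperideals appearing in the intersection defining the Jacobson radical, so $J(G) = \bigcap_{M \in Max(G)} M \subseteq \delta(0)$.

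Next, take $x_1^n \in G$ with $g(x_1^n) \in A$ and $g(x_1^{i-1},1_G,x_{i+1}^n) \notin \delta(0)$ for some $i \in \{1,\dots,n\}$; the goal is to deduce $x_i \in A$. The first substantive step is to observe that every factor $x_j$ with $j \neq i$ lies outside $\delta(0)$: were some $x_j \in \delta(0)$ with $j \neq i$, then because $\delta(0)$ is a hyperideal and therefore absorbs products, we would get $g(x_1^{i-1},1_G,x_{i+1}^n) \in \delta(0)$, contradicting the hypothesis. Combining this with $J(G) \subseteq \delta(0)$ yields $x_j \notin J(G)$ for every $j \neq i$, which is exactly the form of non-membership the $J$-hyperideal condition consumes.

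Finally, I would feed these non-$J(G)$ factors into the $J$-hyperideal property one index at a time. Starting from $g(x_1^n) \in A$ together with $x_{j_1} \notin J(G)$ for some $j_1 \neq i$, the $J$-hyperideal condition replaces $x_{j_1}$ by $1_G$ while keeping the (single-valued, since $(G,g)$ is an $n$-ary semigroup) product in $A$; iterating over all indices $j \neq i$ successively replaces each such $x_j$ by $1_G$. After $n-1$ such replacements one reaches an element of the form $g(1_G,\dots,1_G,x_i,1_G,\dots,1_G) \in A$, which by commutativity and the scalar identity $g(x_i,1_G^{(n-1)}) = x_i$ is just $x_i$, so $x_i \in A$; properness of $A$ is inherited from its being a $J$-hyperideal.

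The main thing to get right is the conceptual pivot that maximality of $\delta(0)$ converts the hypothesis $g(x_1^{i-1},1_G,x_{i+1}^n) \notin \delta(0)$ into the usable non-membership statements $x_j \notin J(G)$ for $j \neq i$. The only other point requiring care is the bookkeeping of the iteration: at each stage I must check that the component being replaced is still the original $x_j$ (untouched by the earlier replacements, which only affected other coordinates) and still lies outside $J(G)$, so that the $J$-hyperideal property genuinely applies at every step.
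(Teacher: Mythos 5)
Your proof is correct, but it takes a different route from the paper's. The paper argues by contraposition and uses a regrouping trick: assuming $g(x_1^n)\in A$ and $x_i\notin A$, it rewrites $g(x_1^n)=g\bigl(g(x_1^{i-1},1_G,x_{i+1}^n),x_i,1_G^{(n-2)}\bigr)\in A$, so that the complementary product $y=g(x_1^{i-1},1_G,x_{i+1}^n)$ appears as a \emph{single factor}; one application of the $J$-hyperideal property then forces $y\in J(G)\subseteq\delta(0)$, which is exactly the contrapositive of the $\delta(0)$-condition. You instead argue directly: from $g(x_1^{i-1},1_G,x_{i+1}^n)\notin\delta(0)$ and the absorption property of the hyperideal $\delta(0)$ you deduce $x_j\notin\delta(0)\supseteq J(G)$ for every $j\neq i$, and then apply the $J$-hyperideal property $n-1$ times, replacing the untouched factors $x_j$ by $1_G$ one at a time until only $x_i$ survives. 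Both proofs pivot on the same fact that maximality of $\delta(0)$ gives $J(G)\subseteq\delta(0)$. The paper's version is shorter and needs no induction-style bookkeeping, since the whole complementary product is consumed in one step; yours has the merit of being direct rather than contrapositive and of making explicit precisely which elements must avoid $J(G)$, at the cost of the extra absorption lemma and the iteration whose invariant (each remaining coordinate is still the original $x_j$) you rightly flag as the point needing care.
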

\begin{proof}
Assume that $g(x_1^n) \in A$ for $x_1^n \in G$ such that $x_i \notin A$ for some $i \in \{1,\cdots,n\}$. Since $g(g(x_1^{i-1},1_G,x_{i+1}^n),x_i,1_G^{(n-2)}) \in A$ and $g(x_i,1_G^{(n-1)}) \notin A$, we conclude that $g(x_1^{i-1},1_G,x_{i+1}^n) \in J(G)$. Since $\delta(0)$ is a maximal hyperideal of $G$, $J(G) \subseteq \delta(0)$ and so $g(x_1^{i-1},1_G,x_{i+1}^n) \in \delta(0)$. Thus $A$ is an $n$-ary $\delta(0)$- hyperideal of $G$.
\end{proof}
\begin{theorem} \label{1}
Let $A$ be an $n$-ary $\delta(0)$-hyperideal of $G$. Then $A$ is contained in $\delta(0)$.
\end{theorem}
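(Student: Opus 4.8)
The plan is to prove the containment element-by-element: I would take an arbitrary $a \in A$ and show $a \in \delta(0)$, arguing by contradiction. The whole argument hinges on the scalar identity $g(a, 1_G^{(n-1)}) = a$, which lets me write any element of $A$ as an $n$-ary product in which all but one slot is occupied by $1_G$.

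Concretely, I would fix $a \in A$ and set $x_1 = a$ and $x_2 = \cdots = x_n = 1_G$. Then $g(x_1^n) = g(a, 1_G^{(n-1)}) = a \in A$, so the hypothesis $g(x_1^n) \in A$ in the definition of an $n$-ary $\delta(0)$-hyperideal is met. The decisive choice is to test the index $i = 2$ (any index other than $1$ would do): for this index $g(x_1, 1_G, x_3^n) = g(a, 1_G^{(n-1)}) = a$. Now suppose toward a contradiction that $a \notin \delta(0)$; then $g(x_1, 1_G, x_3^n) = a \notin \delta(0)$, and the defining property of $A$ forces $x_2 = 1_G \in A$.

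To close the argument I would invoke properness of $A$: since $A$ is a hyperideal, $1_G \in A$ together with the absorption property $g(x, A, 1_G^{(n-2)}) \subseteq A$ and the scalar identity $g(x, 1_G^{(n-1)}) = x$ gives $x = g(x, 1_G, 1_G^{(n-2)}) \in A$ for every $x \in G$, i.e. $A = G$, contradicting the assumption that $A$ is proper. Hence $a \in \delta(0)$, and since $a \in A$ was arbitrary, $A \subseteq \delta(0)$.

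The one place that needs care — and the step I expect to be the only real pitfall — is the choice of the test index. Choosing $i = 1$ would replace the entry $a$ itself by $1_G$, giving $g(1_G^{(n)}) = 1_G$, whose membership in $\delta(0)$ carries no useful information; selecting instead an index $i \neq 1$ keeps the reduced product equal to $a$, which is exactly what makes the contradiction fire. Everything else is a direct unwinding of the definitions, so no separate treatment of the degenerate case $\delta(0) = G$ is needed: there the hypothesis $a \notin \delta(0)$ is simply never satisfied and the conclusion is immediate.
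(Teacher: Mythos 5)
Your proof is correct and takes essentially the same route as the paper: the paper also fixes $x \in A \setminus \delta(0)$, observes that $g(x,1_G^{(n-1)}) \in A$ while $g(x,1_G^{(n-1)}) = x \notin \delta(0)$, and concludes $1_G \in A$, contradicting properness. You merely make explicit two points the paper leaves implicit, namely that the test index must be chosen different from the slot holding $x$, and why $1_G \in A$ forces $A = G$.
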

\begin{proof}
Suppose that $A$ is an $n$-ary $\delta(0)$-hyperideal of $G$ such that it is not contained in $\delta(0)$. Let $x \in A$ but $x \notin \delta(0)$. Since $A$ is an $n$-ary $\delta(0)$-hyperideal of $G$, $g(x,1_G^{(n-1)}) \in A$ and $g(x,1_G^{(n-1)}) \notin \delta(0)$, we get $1 \in A$ which is a contradiction. Thus $A$ is contained in $\delta(0)$.
\end{proof}
Recall from \cite{sorc1} that a proper hyperideal $A$ of  $G$ is an $n$-ary prime hyperideal if for hyperideals $A_1^n$ of $G$, $g(A_1^ n) \subseteq A$ implies that $A_i \subseteq A$ for some $i \in \{1,\cdots,n\}$. 
In Lemma 4.5 in \cite{sorc1}, it was shown that  a proper hyperideal  $A$ of $G$ is an $n$-ary prime hyperideal if for $x^n_ 1 \in G$, $g(x^n_ 1) \in A$ implies that $x_i \in A$ for some $i \in \{1,\cdots,n\}$.

\begin{remark} \label{1/3}
 $\delta(0)$ is an $n$-ary prime hyperideal of $G$ if and only if $\delta(0)$ is an $n$-ary $\delta(0)$-hyperideal of $G$.
\end{remark}
\begin{theorem} \label{1/2}
Assume that $A$ is an $n$-ary prime hyperideal of $G$ with $\delta(A)=A$. Then $A$ is an $n$-ary $\delta(0)$-hyperideal of $G$  if and only if $A=\delta(0)$.
\end{theorem}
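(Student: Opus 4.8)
The plan is to prove the two implications separately, the backward direction resting on Remark \ref{1/3} and the forward direction on Theorem \ref{1} together with the monotonicity of the expansion $\delta$. A useful preliminary observation is that the two hypotheses, primeness of $A$ and the fixed-point condition $\delta(A)=A$, will each be used in exactly one of the two directions, and that Theorem \ref{1} already hands us one of the two needed containments.

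For the backward implication, I would assume $A=\delta(0)$. Since $A$ is an $n$-ary prime hyperideal by hypothesis, the equality $A=\delta(0)$ immediately makes $\delta(0)$ an $n$-ary prime hyperideal of $G$. Remark \ref{1/3} states precisely that $\delta(0)$ is an $n$-ary prime hyperideal if and only if it is an $n$-ary $\delta(0)$-hyperideal; applying its forward direction yields that $\delta(0)$, and hence $A$, is an $n$-ary $\delta(0)$-hyperideal. I expect this direction to be essentially a one-line invocation of Remark \ref{1/3}.

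For the forward implication, I would assume $A$ is an $n$-ary $\delta(0)$-hyperideal and prove the two containments $A\subseteq\delta(0)$ and $\delta(0)\subseteq A$. The first is exactly the content of Theorem \ref{1}, so it comes for free. For the reverse containment I would use that the zero hyperideal is the smallest hyperideal, so $0\subseteq A$; monotonicity of the hyperideal expansion $\delta$ then gives $\delta(0)\subseteq\delta(A)$, and the standing hypothesis $\delta(A)=A$ upgrades this to $\delta(0)\subseteq A$. Combining the two containments produces $A=\delta(0)$. Note that this direction uses neither the primeness of $A$ nor anything beyond Theorem \ref{1}, the monotonicity of $\delta$, and the equality $\delta(A)=A$.

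The argument is short and I do not anticipate a genuine obstacle; the only point requiring care is bookkeeping on where each hypothesis enters. In particular one should not try to reprove the containment $A\subseteq\delta(0)$ by hand, since Theorem \ref{1} already supplies it, and one should resist the temptation to use primeness in the forward direction, where it plays no role. The crux, such as it is, is recognizing that Remark \ref{1/3} is precisely the tool that converts primeness of $\delta(0)$ into the $\delta(0)$-hyperideal property for the backward direction.
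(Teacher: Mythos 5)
Your proposal is correct and follows essentially the same route as the paper: the forward direction is identical (monotonicity gives $\delta(0)\subseteq\delta(A)=A$, and Theorem \ref{1} gives $A\subseteq\delta(0)$), and your backward direction via Remark \ref{1/3} is just a packaged form of what the paper verifies inline, namely that primeness of $A=\delta(0)$ directly yields the $\delta(0)$-hyperideal property.
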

 \begin{proof}
 ($\Longrightarrow$) Since $0 \in A$, we have $\delta(0) \subseteq \delta(A)=A$. Since $A$ is an $n$-ary $\delta(0)$-hyperideal of $G$, the inclusion $A \subseteq \delta(0)$ holds by \ref{1}. Thus $A=\delta(0)$.
 
 ($\Longleftarrow$) Let $A=\delta(0)$ and  $g(x_1^n) \in A$ for $x_1^n \in G$ such that $g(x_1^{i-1},1_G,x_{i+1}^n) \notin \delta(0)$. Since $A$ is an $n$-ary prime hyperideal of $G$ and $g(x_1^{i-1},1_G,x_{i+1}^n) \notin A$, we get $x_i \in A$. Consequently, $A$ is an $n$-ary $\delta(0)$-hyperideal of $G$.
 \end{proof}
 \begin{theorem} \label{1/6} 
Let $A$ be an $n$-ary $\delta(0)$-hyperideal of $G$ such that $\delta(A)=A$. If there is no $\delta(0)$-hyperideal containing $A$ properly, then $A=\delta(0)$.
 \end{theorem}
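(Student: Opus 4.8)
The plan is to establish the two inclusions $A \subseteq \delta(0)$ and $\delta(0) \subseteq A$ separately; together they force the desired equality $A = \delta(0)$. The first inclusion requires no new work on my part: since $A$ is by hypothesis an $n$-ary $\delta(0)$-hyperideal of $G$, Theorem \ref{1} delivers $A \subseteq \delta(0)$ immediately.

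For the reverse inclusion I would lean on the hypothesis $\delta(A) = A$ together with the monotonicity that is built into the definition of a hyperideal expansion. Because $0 \in A$, the zero hyperideal satisfies $0 \subseteq A$; applying $\delta$ and using that $\delta$ preserves inclusions of hyperideals, I obtain $\delta(0) \subseteq \delta(A)$. Since $\delta(A) = A$ by assumption, this reads $\delta(0) \subseteq A$. This is exactly the maneuver already employed in the proof of Theorem \ref{1/2}, where the author writes ``since $0 \in A$, we have $\delta(0) \subseteq \delta(A) = A$.'' Combining $A \subseteq \delta(0)$ with $\delta(0) \subseteq A$ yields $A = \delta(0)$.

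I do not anticipate a genuine obstacle here; the only point worth flagging is that this argument never actually invokes the standing hypothesis that no $\delta(0)$-hyperideal contains $A$ properly. The two inclusions force $A = \delta(0)$ on their own, so the maximality condition is superfluous for the conclusion as stated. If one insists on routing the proof through maximality, one would argue by contradiction, assuming $A \subsetneq \delta(0)$ and attempting to produce a $\delta(0)$-hyperideal strictly between $A$ and $\delta(0)$; but the direct monotonicity computation shows there is no such gap, so I would present the clean two-inclusion argument rather than the contradiction framing.
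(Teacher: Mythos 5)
Your proof is correct, and it takes a genuinely different --- and shorter --- route than the paper's. The paper uses the maximality hypothesis essentially: supposing $g(x_1^n)\in A$ with $x_i\notin A$, it applies Theorem \ref{1/1} to see that $E_{x_i}=\{x\in G \ \vert \ g(x,x_i,1_G^{(n-2)})\in A\}$ is an $n$-ary $\delta(0)$-hyperideal containing $A$; since no $\delta(0)$-hyperideal contains $A$ properly, $E_{x_i}=A$, whence $g(x_1^{i-1},1_G,x_{i+1}^n)\in A$, and ``continuing the process'' yields $x_j\in A$ for some $j$. This shows $A$ is $n$-ary prime, and the paper then invokes Theorem \ref{1/2} (whose hypotheses include primality) to conclude $A=\delta(0)$. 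You instead prove the two inclusions directly: $A\subseteq\delta(0)$ from Theorem \ref{1}, and $\delta(0)\subseteq\delta(A)=A$ from $0\in A$ together with the monotonicity built into the definition of a hyperideal expansion. As you note, this is exactly the forward-direction computation inside the proof of Theorem \ref{1/2}, and that computation never actually uses primality; so the paper's detour through primality --- and with it the maximality hypothesis --- is logically unnecessary. Your approach buys a cleaner proof, the sharper observation that the hypothesis ``there is no $\delta(0)$-hyperideal containing $A$ properly'' is redundant, and it avoids the somewhat loosely justified ``continue the process'' step in the paper's primality argument. What the paper's route buys is the intermediate fact that such a maximal $A$ is prime; but even in Theorem \ref{admits}, where the present theorem is applied, that fact is recovered anyway from $A=\delta(0)$ via Remark \ref{1/3}, so nothing downstream is lost by adopting your argument.
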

 \begin{proof}
 It is sufficient to show the hyperideal $A$ is prime. Assume that $g(x_1^n) \in A$ for $x_1^n \in G$ but $x_i \notin A$ for some $i \in \{1,\cdots,n\}$. By Theorem \ref{1/1},  $E_{x_i}=\{x \in G \ \vert \ g(x,x_i,1_G^{(n-2)}) \subseteq A\}$ is an $n$-ary $\delta(0)$-hyperideal of $G$ as $A$ is an $n$-ary $\delta(0)$-hyperideal and $x_i \notin A$. Since there is no $\delta(0)$-hyperideal containing $A$ properly, we get $g(x_1^{i-1},1_G,x_{i+1}^n) \in E_{x_i}=A$. Since $A$ is an $n$-ary $\delta(0)$-hyperideal of $G$, we can continue the process and get $x_j \in A$ for some $j \in \{1,\cdots,n\}$. Then $A$ is an $n$-ary prime hyperideal of $G$. Now, by Theorem \ref{1/2}, we get the result that $A=\delta(0)$.
 \end{proof}
 A commutative Krasner $(m,n)$-hyperring $G$ is said to be an $n$-ary hyperintegral domain if $g(x_1^n)=0$ for $x_1^n \in G$ implies that $x_i=0$ for some $i \in \{1,\cdots,n\}$ \cite{sorc1}. 
 
\begin{remark} \label{1/5}
If $G$ is an $n$-ary  hyperintegral domain, then $0$ is an $n$-ary $\delta(0)$-hyperideal of $G$.
 \end{remark} 
 \begin{proposition} \label{1/4}
If  $G$ is a commutative Krasner $(m,n)$-hypering which is not an $n$-ary hyperintegral domain  and $\delta(0)=0$, then $G$ has no $\delta(0)$-hyperideal.
 \end{proposition}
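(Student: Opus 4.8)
The plan is to reduce the statement to a single candidate hyperideal and then eliminate it. By Theorem~\ref{1}, every $n$-ary $\delta(0)$-hyperideal of $G$ is contained in $\delta(0)=0$, and since every hyperideal contains $0$, the only hyperideal that could possibly be an $n$-ary $\delta(0)$-hyperideal is the zero hyperideal $0$ itself. (Here $0$ is a proper hyperideal, since the hypothesis that $G$ is not a hyperintegral domain forces $G\neq\{0\}$.) Thus it suffices to show that $0$ is \emph{not} an $n$-ary $\delta(0)$-hyperideal of $G$.

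Since $G$ is not an $n$-ary hyperintegral domain, I fix $x_1^n\in G$ with $g(x_1^n)=0$ and $x_i\neq 0$ for every $i\in\{1,\dots,n\}$, and argue by contradiction, assuming $0$ is an $n$-ary $\delta(0)$-hyperideal. The main device is to peel off the entries $x_i$ one at a time, replacing each by the scalar identity $1_G$ and invoking the defining implication of a $\delta(0)$-hyperideal at the corresponding position. Precisely, I would show by induction on $k$ that $g(1_G^{(k)},x_{k+1}^n)=0$ for every $0\le k\le n-1$. The base case $k=0$ is the relation $g(x_1^n)=0$. For the inductive step, suppose $g(1_G^{(k)},x_{k+1}^n)=0\in 0$; applying the $\delta(0)$-hyperideal property to this $n$-tuple at the position occupied by $x_{k+1}$ and using $x_{k+1}\notin 0$, the contrapositive of the defining implication forces the element obtained by replacing $x_{k+1}$ with $1_G$, namely $g(1_G^{(k+1)},x_{k+2}^n)$, to lie in $\delta(0)=0$, i.e.\ to equal $0$.

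Taking $k=n-1$ yields $g(1_G^{(n-1)},x_n)=0$, while the scalar identity $g(a,1_G^{(n-1)})=a$ together with commutativity gives $g(1_G^{(n-1)},x_n)=x_n$; hence $x_n=0$, contradicting the choice $x_n\neq 0$. This eliminates $0$ and, combined with the opening reduction, proves the claim. I expect the only genuine difficulty to be notational rather than conceptual: the mixed products $g(x_1^{i-1},1_G,x_{i+1}^n)$ admit no simplification in general, so the argument lives or dies on reducing them, one factor at a time, down to the single-variable product $g(1_G^{(n-1)},x_n)$ on which the scalar-identity axiom can finally act. Care must be taken that each intermediate tuple still has exactly $n$ entries and that the tested position is precisely the one carrying a nonzero $x_{k+1}$, so that the hypothesis $x_{k+1}\notin 0$ legitimately activates the implication.
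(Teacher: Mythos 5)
Your proof is correct, and its skeleton is the same as the paper's: both arguments reduce the problem, via Theorem \ref{1}, to the single candidate hyperideal $0$ (any $n$-ary $\delta(0)$-hyperideal sits inside $\delta(0)=0$, and every hyperideal contains $0$), and both then exploit a tuple $x_1^n$ with $g(x_1^n)=0$ and every $x_i\neq 0$, which exists precisely because $G$ is not an $n$-ary hyperintegral domain, to disqualify $0$. The only real divergence is how $0$ is disqualified. The paper notes that such a tuple makes $\delta(0)=0$ fail to be $n$-ary prime and then invokes Remark \ref{1/3} (for $\delta(0)$ itself, being an $n$-ary $\delta(0)$-hyperideal is equivalent to being $n$-ary prime); you instead run a contrapositive ``peeling'' induction, showing $g(1_G^{(k)},x_{k+1}^n)=0$ for all $k$ and landing on $x_n=g(1_G^{(n-1)},x_n)=0$, a contradiction. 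Your induction is, in substance, a proof of the direction of Remark \ref{1/3} that the argument needs (a $\delta(0)$-hyperideal coinciding with $\delta(0)$ must be prime), a statement the paper records without proof; so your version is more self-contained and makes explicit exactly where the scalar identity and the hypothesis $x_{k+1}\notin 0$ enter, at the cost of redoing work the paper packages as a cited remark.
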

 \begin{proof}
 Let  $\delta(0)=0$.  Assume that $g(x_1^n) \in \delta(0)$ for $x_1^n \in G$. Since $G$ is not an $n$-ary hyperintegral domain, we get $x_i \neq 0$ for all $i \in \{1,\cdots,n\}$ which means $\delta(0)$ is not an $n$-ary prime hyperideal. Therefore $\delta(0)$ is not an $n$-ary $\delta(0)$-hyperideal of $G$ by Remark \ref{1/3}. Now, assume that $A\neq 0$ is an arbitrary hyperideal of $G$. If $A$ is an $n$-ary $\delta(0)$-hyperideal of $G$, then $A$ is containd in $\delta(0)$ by Theorem \ref{1} which means $A=0$, a contradiction. Consequently, $G$ has no $\delta(0)$-hyperideal.
 \end{proof}
 In view of Remark \ref{1/5} and Proposition \ref{1/4}, we have the following result.
 \begin{corollary}
 Let $\delta(0)=0$. Then $G$ is an $n$-ary hyperintegral domain if and only if $0$ is an $n$-ary $\delta(0)$-hyperideal of $G$.
 \end{corollary}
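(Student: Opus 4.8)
The plan is to prove both implications of the biconditional directly from the two results cited immediately before the statement, namely Remark \ref{1/5} and Proposition \ref{1/4}, since under the standing hypothesis $\delta(0)=0$ the corollary is essentially just their combination.

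For the forward direction I would suppose $G$ is an $n$-ary hyperintegral domain and simply invoke Remark \ref{1/5}, which asserts precisely that $0$ is then an $n$-ary $\delta(0)$-hyperideal of $G$. This conclusion is stated for an arbitrary hyperideal expansion $\delta$, so in particular it applies to one with $\delta(0)=0$. If one wished to make it explicit rather than quote the remark, the underlying argument is short: if $g(x_1^n)=0$ with $g(x_1^{i-1},1_G,x_{i+1}^n)\notin\delta(0)=0$, then every $x_j$ with $j\neq i$ must be nonzero, whence the domain property forces $x_i=0\in 0$.

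For the reverse direction I would argue contrapositively using Proposition \ref{1/4}. Assume that $0$ is an $n$-ary $\delta(0)$-hyperideal of $G$ but, toward a contradiction, that $G$ is \emph{not} an $n$-ary hyperintegral domain. Because $\delta(0)=0$ by hypothesis, Proposition \ref{1/4} applies verbatim and yields that $G$ has no $\delta(0)$-hyperideal at all; in particular $0$ cannot be an $n$-ary $\delta(0)$-hyperideal, contradicting the assumption. Hence $G$ must be an $n$-ary hyperintegral domain, which completes the equivalence.

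Since each direction reduces to quoting an already-established result, I do not anticipate a genuine obstacle. The only point requiring care is bookkeeping of the hypotheses: I must confirm that the standing assumption $\delta(0)=0$ is exactly what licenses the use of Proposition \ref{1/4} in the reverse direction, as that proposition is formulated only for $\delta(0)=0$, while checking that Remark \ref{1/5} may be freely specialized to the same $\delta$ in the forward direction.
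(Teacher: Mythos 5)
Your proposal is correct and matches the paper exactly: the paper derives this corollary solely from Remark \ref{1/5} (forward direction) and Proposition \ref{1/4} (reverse direction, used contrapositively), which is precisely your argument. Your added unpacking of the remark's underlying domain argument is also sound, just more detail than the paper supplies.
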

 Recall from \cite{sorc1} that the radical  of the hyperideal $A$ of $G$, denoted by $rad(A)$, is 
the intersection of all prime hyperideals $G$  containing $A$. If the set of all prime hyperideals containing $A$ is empty, then we define $rad(A)=G$.
 Moreover,  a proper hyperideal $A$ of  $G$  is  an $n$-ary primary hyperideal if $x_1^n \in G$ and $g(x^n _1) \in A$  imply either  $x_i \in A$ or  $g(x_1^{i-1}, 1_G, x_{ i+1}^n) \in rad(A)$ for some $i \in \{1,\cdots,n\}$.
\begin{theorem} \label{2}
Let $A$ be an $n$-ary primary hyperideal of $G$. If $rad(A)$ is containd in $ \delta(0)$, then $A$ is an $n$-ary $\delta(0)$-hyperideal.
\end{theorem}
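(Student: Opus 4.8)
The plan is to verify the defining condition of an $n$-ary $\delta(0)$-hyperideal head-on, feeding in the primary hypothesis and using the containment $rad(A) \subseteq \delta(0)$ in contrapositive form. First I would note that $A$ is proper, being a primary hyperideal, so it is eligible to be a $\delta(0)$-hyperideal. Then I would take an arbitrary $x_1^n \in G$ with $g(x_1^n) \in A$ and assume $g(x_1^{i-1},1_G,x_{i+1}^n) \notin \delta(0)$ for some index $i \in \{1,\cdots,n\}$; the entire task reduces to showing $x_i \in A$.

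The key step is to transfer the non-membership from $\delta(0)$ down to $rad(A)$. Since $rad(A) \subseteq \delta(0)$ by hypothesis, the contrapositive yields $G-\delta(0) \subseteq G-rad(A)$, so $g(x_1^{i-1},1_G,x_{i+1}^n) \notin \delta(0)$ forces $g(x_1^{i-1},1_G,x_{i+1}^n) \notin rad(A)$. Now I would invoke the definition of an $n$-ary primary hyperideal: from $g(x_1^n) \in A$ we obtain, at the index $i$, either $x_i \in A$ or $g(x_1^{i-1},1_G,x_{i+1}^n) \in rad(A)$. The second alternative has just been ruled out, so $x_i \in A$, which is exactly the $\delta(0)$-hyperideal condition. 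This completes the argument.

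I expect the only point requiring care to be the alignment of the index: the defining disjunction of a primary hyperideal must be applied precisely at the index $i$ that witnesses $g(x_1^{i-1},1_G,x_{i+1}^n) \notin \delta(0)$, not at some unrelated index. Reading the primary condition as holding for each such fixed $i$ (that is, $g(x_1^n) \in A$ together with $x_i \notin A$ forces $g(x_1^{i-1},1_G,x_{i+1}^n) \in rad(A)$), the witness may be used directly and the conclusion $x_i \in A$ is immediate. Beyond this bookkeeping the proof needs no computation, and it also makes transparent why the hypothesis is the one-sided inclusion $rad(A) \subseteq \delta(0)$ rather than equality: only this direction is needed to discard the radical alternative.
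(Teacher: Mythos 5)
Your proof is correct and follows essentially the same route as the paper's: both pass from $g(x_1^{i-1},1_G,x_{i+1}^n)\notin\delta(0)$ to $g(x_1^{i-1},1_G,x_{i+1}^n)\notin rad(A)$ via the inclusion $rad(A)\subseteq\delta(0)$, then invoke the primary condition at that same index $i$ to conclude $x_i\in A$. Your remark about fixing the index in the primary condition is a sound reading of the definition and matches how the paper implicitly applies it.
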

\begin{proof}
Assume that $A$ is an $n$-ary primary hyperideal of $G$. Let $g(x_1^n) \in A$ for $x_1^n \in G$ but $g(x_1^{i-1},1_G,x_{i+1}^n) \notin \delta(0)$ for some $i \in \{1,\cdots,n\}$. This means $g(x_1^{i-1},1_G,x_{i+1}^n) \notin rad(A)$ and so $x_i \in A$ as $A$ is an $n$-ary primary hyperideal of $G$. Hence $A$ is an $n$-ary $\delta(0)$-hyperideal.
\end{proof}
As an immediate consequence of the previous theorem, we have the following result.
\begin{corollary}
If $A$ is an $n$-ary prime hyperideal of $G$ such that $A$ is contained in $\delta(0)$, then $A$ is an $n$-ary $\delta(0)$-hyperideal.
\end{corollary}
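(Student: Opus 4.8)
The plan is to deduce the statement directly from Theorem \ref{2}, since an $n$-ary prime hyperideal is nothing but a particularly well-behaved $n$-ary primary hyperideal whose radical coincides with itself. First I would isolate the two elementary facts that power the reduction, and then feed them into Theorem \ref{2}.

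The first fact is that every $n$-ary prime hyperideal $A$ of $G$ is in particular an $n$-ary primary hyperideal. Indeed, the defining implication for a primary hyperideal requires that $g(x_1^n) \in A$ force either $x_i \in A$ or $g(x_1^{i-1},1_G,x_{i+1}^n) \in rad(A)$ for some $i \in \{1,\cdots,n\}$; but the prime condition already yields the stronger conclusion $x_i \in A$, so the primary condition holds a fortiori. The second fact is that $rad(A)=A$ whenever $A$ is prime. By definition of the radical one always has $A \subseteq rad(A)$, and for the reverse inclusion note that $A$ is itself an $n$-ary prime hyperideal containing $A$, hence one of the hyperideals appearing in the intersection that defines $rad(A)$; therefore $rad(A) \subseteq A$, and equality follows.

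With these two observations in hand, the hypothesis $A \subseteq \delta(0)$ becomes $rad(A)=A \subseteq \delta(0)$, so $rad(A)$ is contained in $\delta(0)$. Since $A$ is an $n$-ary primary hyperideal by the first fact and its radical lies in $\delta(0)$, Theorem \ref{2} applies verbatim and delivers that $A$ is an $n$-ary $\delta(0)$-hyperideal, as required.

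I expect no genuine obstacle here: the only point requiring a little care is the passage between the hyperideal-theoretic and element-wise formulations of primeness, for which one invokes Lemma 4.5 of \cite{sorc1}; both the implication from prime to primary and the identity $rad(A)=A$ are otherwise routine. As an alternative that bypasses Theorem \ref{2} altogether, one could argue directly from the definition: if $g(x_1^n) \in A$ and $g(x_1^{i-1},1_G,x_{i+1}^n) \notin \delta(0)$ for some $i$, then since $A \subseteq \delta(0)$ the element $g(x_1^{i-1},1_G,x_{i+1}^n)$ lies outside $A$, and writing $g(x_1^n)=g(g(x_1^{i-1},1_G,x_{i+1}^n),x_i,1_G^{(n-2)})$ the primeness of $A$ together with $1_G \notin A$ forces $x_i \in A$.
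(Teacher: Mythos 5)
Your proposal is correct and follows exactly the paper's intended route: the corollary is stated there as an immediate consequence of Theorem \ref{2}, relying precisely on the two facts you spell out (an $n$-ary prime hyperideal is primary, and $rad(A)=A$ for prime $A$), so that $A \subseteq \delta(0)$ becomes $rad(A) \subseteq \delta(0)$. Your supplementary direct argument via $g(x_1^n)=g(g(x_1^{i-1},1_G,x_{i+1}^n),x_i,1_G^{(n-2)})$ is also sound, but the main reduction is the same as the paper's.
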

In \cite{sorc1}, it was shown that if $x \in rad(A)$ then 
there exists $r \in \mathbb {N}$ such that $g(x^ {(r)} , 1_G^{(n-r)} ) \in A$ for $r \leq n$, or $g_{(l)} (x^ {(r)} ) \in A$ for $r= l(n-1) + 1$.
\begin{theorem}
Let $0$ be an $n$-ary $\delta(0)$-hyperideal of $G$. If $rad(\delta(0))=\delta(0)$, then $rad(0)$ is an $n$-ary  $\delta(0)$-hyperideal of $G$.
\end{theorem}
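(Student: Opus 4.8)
The plan is to verify the defining condition for an $n$-ary $\delta(0)$-hyperideal directly on $rad(0)$. So I would take $x_1^n \in G$ with $g(x_1^n) \in rad(0)$ and set $z := g(x_1^{i-1},1_G,x_{i+1}^n)$, assuming $z \notin \delta(0)$ for some $i \in \{1,\cdots,n\}$; the goal is then to show $x_i \in rad(0)$. First I would record the factorization $g(x_1^n)=g(z,x_i,1_G^{(n-2)})$, which is valid because $g$ is commutative, associative and $1_G$ is a scalar identity (so $z$ is literally the product of the $x_j$ with $j\neq i$).

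Next, since $g(x_1^n) \in rad(0)$, the radical characterization recalled just before the statement supplies an $r \in \mathbb{N}$ with $g((g(x_1^n))^{(r)},1_G^{(n-r)})=0$ (or the $g_{(l)}$-analogue when $r=l(n-1)+1$). Substituting the factorization and using commutativity, this power equals the product of the $r$-th power of $z$ with the $r$-th power of $x_i$, so that this product lies in the zero hyperideal. Now I would peel the $z$-factors off one at a time: writing the product in the form $g(z,P,1_G^{(n-2)})=0$, where $P$ collects all remaining factors, the relevant cofactor is $g(z,1_G^{(n-1)})=z \notin \delta(0)$, so the hypothesis that $0$ is an $n$-ary $\delta(0)$-hyperideal forces $P \in 0$. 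Iterating this exactly $r$ times strips away every copy of $z$ and leaves the $r$-th power of $x_i$ in $0$, i.e. $x_i \in rad(0)$, which is what we want.

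Finally, I would confirm that $rad(0)$ is proper, so that it qualifies as an $n$-ary $\delta(0)$-hyperideal: since $0$ is an $n$-ary $\delta(0)$-hyperideal, Theorem \ref{1} gives $0 \subseteq \delta(0)$, whence monotonicity of the radical together with the hypothesis yields $rad(0) \subseteq rad(\delta(0))=\delta(0) \subsetneq G$. The role of the assumption $rad(\delta(0))=\delta(0)$ is exactly this containment; it is also what one would invoke if one preferred to remove all $r$ copies of $z$ in a single use of the definition, since then one needs every power $g(z^{(r)},1_G^{(n-r)})$ to stay outside $\delta(0)=rad(\delta(0))$ (otherwise $z \in rad(\delta(0))=\delta(0)$, a contradiction). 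I expect the main technical obstacle to be the $n$-ary power bookkeeping — translating the radical condition through the $g_{(l)}$ notation and justifying the single-factor extraction $g(z,P,1_G^{(n-2)})$ from an $r$-fold product via associativity and the scalar identity — rather than the ideal-theoretic content, which is just the short peeling argument above.
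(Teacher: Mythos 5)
Your core argument is correct, but it takes a genuinely different route from the paper's. The paper rewrites the vanishing $r$-th power of $g(x_1^n)$ as the single $n$-ary product $g\big(g(x_1^{(r)},1_G^{(n-r)}),\cdots,g(x_n^{(r)},1_G^{(n-r)})\big)$ and applies the hypothesis that $0$ is an $n$-ary $\delta(0)$-hyperideal exactly once; the cofactor in that one application is the $r$-th power $g(z^{(r)},1_G^{(n-r)})$ of $z=g(x_1^{i-1},1_G,x_{i+1}^n)$, and this is precisely where $rad(\delta(0))=\delta(0)$ is used: since $z\notin rad(\delta(0))$, no power of $z$ can lie in $\delta(0)$. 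Your peeling argument instead invokes the hypothesis on $0$ a total of $r$ times, each time with cofactor $z$ itself, so each step needs only the standing assumption $z\notin\delta(0)$. The trade-off: your route shows the radical hypothesis is not needed for the implication ``$g(x_1^n)\in rad(0)$ and $z\notin\delta(0)$ imply $x_i\in rad(0)$'' at all — you use $rad(\delta(0))=\delta(0)$ only for the containment $rad(0)\subseteq\delta(0)$ and the properness remark — so you in effect prove a slightly stronger statement; the paper's route is shorter (one application rather than an $r$-step induction) and parallels the radical characterization more directly, but genuinely requires the hypothesis. One caveat in your final paragraph: the step $\delta(0)\subsetneq G$ is not justified, since a hyperideal expansion may well have $\delta(0)=G$ (e.g., $\delta_G$), so properness of $rad(0)$ does not follow as stated; this is a minor point, however, since the paper's own proof does not address properness of $rad(0)$ at all.
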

\begin{proof}
Let $g(x_1^n) \in rad(0)$ for $x_1^n \in G$ such that $g(x_1^{i-1},1_G,x_{i+1}^n) \notin \delta(0)=rad(\delta(0))$ for some $i \in \{1,\cdots,n\}$. Then we conclude that 
there exists $r \in \mathbb {N}$ such that $g(g(x_1^n)^ {(r)} , 1_G^{(n-r)} )=0$ for $r \leq n$, or $g_{(l)} (g(x_1^n)^ {(r)} )=0$ for $r= l(n-1) + 1$. Let $g(g(x_1^{(r)},1_G^{(n-r)}),\cdots,g(x_n^{(r)},1_G^{(n-r)}))=g(g(x_1^n)^ {(r)} , 1_G^{(n-r)} )=0$ for some $r \leq n$. Since  $g(x_1^{i-1},1_G,x_{i+1}^n) \notin rad(\delta(0))$,  we get the result that 

$g(g(x_1^{(r)},1_G^{(n-r)}),\cdots,g(x_{i-1}^{(r)},1_G^{(n-r)}),1_G,g(x_{i+1}^{(r)},1_G^{(n-r)}),\cdots,g(x_n^{(r)},1_G^{(n-r)})$

$\hspace{1cm}=g(g(x_1^{i-1},1_G,1_{i+1}^n)^{(r)},1_G^{(n-r)})$

$\hspace{1cm} \notin \delta(0)$ 
\\for all $r \leq n$ and $g_{(l)}(g(x_1^{i-1},1_G,x_{i+1}^n)^{(r)}) \notin \delta(0)$ for $r=l(n-1)+1$. Since $0$ is an $n$-ary $\delta(0)$-hyperideal of $G$, we have $g(x_i^{(r)},1_G^{(n-r)})=0$  which means $x_i \in rad(0)$. By using a similar argument, one can easily complete the proof where  $r= l(n-1) + 1$.
\end{proof}
A proper hyperideal $A$ of $G$ is said to be an $n$-ary $\delta$-primary hyperideal if $g(x_1^n) \in A$ for $x_1^n \in G$ implies that $x_i \in A$ or $g(x_1^{i-1},1_G,x_{i-1}^n) \in \delta(A)$ for some $i \in \{1,\cdots,n\}$ \cite{mah3}.
\begin{theorem} Let $A$ be a proper hyperideal of $G$.
\begin{itemize}
\item[\rm{(i)}]~ If $A$ is an $n$-ary $\delta(0)$-hyperideal of $G$, then $A$ is an $n$-ary $\delta$-primary hyperideal.
\item[\rm{(ii)}]~ 
Let $\delta^2(0) \subseteq \delta(0)$. Then $A$ is an $n$-ary $\delta(0)$-hyperideal of $G$ if and only if $A$ is an $n$-ary $\delta$-primary hyperideal and it is contained in $\delta(0)$.
\end{itemize}
\end{theorem}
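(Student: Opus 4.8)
The plan is to read both statements off from two facts already available: the containment $A \subseteq \delta(0)$ proved in Theorem \ref{1}, and the monotonicity of the expansion $\delta$. Everything reduces to comparing $\delta(0)$ with $\delta(A)$, since $0 \subseteq A$ gives $\delta(0) \subseteq \delta(A)$ for free, while the hypothesis $\delta^2(0) \subseteq \delta(0)$ will be what forces the reverse containment $\delta(A) \subseteq \delta(0)$.

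For (i), suppose $A$ is an $n$-ary $\delta(0)$-hyperideal and $g(x_1^n) \in A$. Fix an index $i$. If $g(x_1^{i-1},1_G,x_{i+1}^n) \notin \delta(0)$, the defining property of a $\delta(0)$-hyperideal forces $x_i \in A$. If instead $g(x_1^{i-1},1_G,x_{i+1}^n) \in \delta(0)$, then $\delta(0) \subseteq \delta(A)$ gives $g(x_1^{i-1},1_G,x_{i+1}^n) \in \delta(A)$. In either case the $n$-ary $\delta$-primary condition holds, so (as $A$ is proper) $A$ is $n$-ary $\delta$-primary. The forward direction of (ii) is then immediate: (i) supplies the $\delta$-primary property, and Theorem \ref{1} supplies $A \subseteq \delta(0)$.

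The substantive part is the converse of (ii). Assume $A$ is $n$-ary $\delta$-primary with $A \subseteq \delta(0)$, and suppose $g(x_1^n) \in A$ with $g(x_1^{i-1},1_G,x_{i+1}^n) \notin \delta(0)$. The key step is to bound $\delta(A)$ by $\delta(0)$: from $A \subseteq \delta(0)$ and monotonicity, $\delta(A) \subseteq \delta(\delta(0)) = \delta^2(0)$, and then the hypothesis $\delta^2(0) \subseteq \delta(0)$ yields $\delta(A) \subseteq \delta(0)$. Hence $g(x_1^{i-1},1_G,x_{i+1}^n) \notin \delta(0)$ entails $g(x_1^{i-1},1_G,x_{i+1}^n) \notin \delta(A)$, and the $\delta$-primary property applied at this index $i$ rules out the option $g(x_1^{i-1},1_G,x_{i+1}^n) \in \delta(A)$ and forces $x_i \in A$. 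Thus $A$ is an $n$-ary $\delta(0)$-hyperideal.

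The delicate point is the treatment of the index $i$ in the converse: the $\delta$-primary property is phrased ``for some $i$'', so one must be able to invoke it at precisely the index witnessing $g(x_1^{i-1},1_G,x_{i+1}^n) \notin \delta(0)$, exactly in the style already used in the proof of Theorem \ref{2} for $n$-ary primary hyperideals. What legitimizes this is the containment $\delta(A) \subseteq \delta(0)$, which is the single place where the standing hypothesis $\delta^2(0) \subseteq \delta(0)$ is actually needed; without it one could only conclude $\delta(A) \subseteq \delta^2(0)$ and the alternative $g(x_1^{i-1},1_G,x_{i+1}^n) \in \delta(A)$ could not be excluded. I expect this comparison of $\delta(A)$ with $\delta(0)$ to be the only nonroutine ingredient, the rest being a direct translation between the two defining conditions.
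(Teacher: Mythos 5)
Your proposal is correct and follows essentially the same route as the paper: part (i) via the containment $\delta(0)\subseteq\delta(A)$, the forward direction of (ii) via (i) plus Theorem \ref{1}, and the converse of (ii) via the chain $\delta(A)\subseteq\delta(\delta(0))=\delta^2(0)\subseteq\delta(0)$, with the disjunction in the $\delta$-primary definition invoked at the relevant index exactly as the paper itself does. The only difference is that you phrase the converse contrapositively (assuming $g(x_1^{i-1},1_G,x_{i+1}^n)\notin\delta(0)$ and forcing $x_i\in A$) while the paper assumes $x_i\notin A$ and forces $g(x_1^{i-1},1_G,x_{i+1}^n)\in\delta(0)$, which is the same argument.
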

\begin{proof}
(i) Let  $A$ be an $n$-ary $\delta(0)$-hyperideal of $G$. Assume that $g(x_1^n) \in A$ for $x_1^n \in G$ such that $g(x_1^{i-1},1_G,x_{i+1}^n) \notin \delta(A)$ for some $i \in \{1,\cdots,n\}$. Therefore $g(x_1^{i-1},1_G,x_{i+1}^n) \notin \delta(0)$ as $\delta(0) \subseteq \delta(A)$. Since $A$ is an $n$-ary $\delta(0)$-hyperideal, we get $x_i \in A$. Thus $A$ is an $n$-ary $\delta$-primary hyperideal.

(ii) ($\Longrightarrow$) Let $A$ be an $n$-ary $\delta(0)$-hyperideal of $G$. By (i) and Theorem \ref{1}, we are done.

($\Longleftarrow$) Assume that $A$ is an $n$-ary $\delta$-primary hyperideal and it is contained in $\delta(0)$. Suppose that $g(x_1^n) \in A$ for $x_1^n \in G$ such that $x_i \notin A$ for some $i \in \{1,\cdots,n\}$. Since $A$ is an $n$-ary $\delta$-primary hyperideal and  $A \subseteq \delta(0)$, we conclude that $g(x_1^{i-1},1_G,x_{i+1}^n) \in \delta(A) \subseteq \delta^2(0)$ which means $g(x_1^{i-1},1_G,x_{i+1}^n) \in \delta(0)$. Consequently, $A$ is an $n$-ary $\delta(0)$-hyperideal of $G$.
\end{proof}
\begin{theorem} \label{admits}
If $G$ admits an $n$-ary $\delta(0)$-hyperideal, then there exists an $n$-ary $\delta(0)$-hyperideal of $G$ such that there is no $\delta(0)$-hyperideal containing it. Moreover, if $\delta(A)=A$ for every  $n$-ary $\delta(0)$-hyperideal $A$ that there is no $\delta(0)$-hyperideal containing it, then $\delta(0)$ is an $n$-ary prime hyperideal of $G$.
\end{theorem}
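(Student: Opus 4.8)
The plan is to treat the two assertions separately: the existence of a maximal $n$-ary $\delta(0)$-hyperideal by a Zorn's lemma argument, and the primeness of $\delta(0)$ by feeding such a maximal hyperideal into Theorem \ref{1/6} and Remark \ref{1/3}.

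For the first assertion, I would let $\Sigma$ be the set of all $n$-ary $\delta(0)$-hyperideals of $G$, partially ordered by inclusion. By hypothesis $\Sigma \neq \varnothing$, so it suffices, by Zorn's lemma, to show that every chain in $\Sigma$ admits an upper bound inside $\Sigma$; a maximal element of $\Sigma$ is then exactly an $n$-ary $\delta(0)$-hyperideal with no $\delta(0)$-hyperideal containing it. Given a chain $\{A_\lambda\}_{\lambda \in \Lambda} \subseteq \Sigma$, the candidate upper bound is $A = \bigcup_{\lambda \in \Lambda} A_\lambda$.

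Verifying that $A \in \Sigma$ is the only delicate point, and I expect it to be the main obstacle. Three things must be checked. That $A$ is a hyperideal follows from directedness of the chain: any finite list of elements of $A$ lies in a single $A_\lambda$, so $f(b_1^m) \subseteq A$ for $b_1^m \in A$ and $g(a_1^{i-1}, A, a_{i+1}^n) \subseteq A$ for $a_1^n \in G$. That $A$ is proper follows because no proper hyperideal contains $1_G$ (since $\langle 1_G \rangle = G$), whence $1_G \notin A$; alternatively, Theorem \ref{1} gives $A_\lambda \subseteq \delta(0)$ for each $\lambda$, so $A \subseteq \delta(0)$. Finally, to see $A$ is an $n$-ary $\delta(0)$-hyperideal, suppose $g(x_1^n) \in A$ with $g(x_1^{i-1},1_G,x_{i+1}^n) \notin \delta(0)$ for some $i$; then $g(x_1^n) \in A_\lambda$ for some $\lambda$, and since $A_\lambda$ is an $n$-ary $\delta(0)$-hyperideal we get $x_i \in A_\lambda \subseteq A$. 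Hence $A \in \Sigma$ and Zorn's lemma yields the desired maximal hyperideal.

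For the second assertion, I would take an $n$-ary $\delta(0)$-hyperideal $A$ produced above, so that no $\delta(0)$-hyperideal contains $A$ properly. The standing hypothesis then gives $\delta(A) = A$, so $A$ satisfies all the hypotheses of Theorem \ref{1/6}, which forces $A = \delta(0)$. Thus $\delta(0)$ is itself an $n$-ary $\delta(0)$-hyperideal of $G$, and Remark \ref{1/3} immediately gives that $\delta(0)$ is an $n$-ary prime hyperideal of $G$, completing the argument.
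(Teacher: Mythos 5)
Your proposal is correct and follows essentially the same route as the paper: Zorn's lemma applied to the poset $\Sigma$ of all $n$-ary $\delta(0)$-hyperideals with the union of a chain as upper bound, followed by Theorem \ref{1/6} (using the hypothesis $\delta(A)=A$) and Remark \ref{1/3} to conclude that $\delta(0)$ is $n$-ary prime. If anything, you are slightly more careful than the paper, which treats only countable chains and does not explicitly verify that the union of a chain is a proper hyperideal.
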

\begin{proof}
Let $A$ be an $n$-ary $\delta(0)$-hyperideal of $G$. Assume that $\Sigma$ is the set of all $n$-ary $\delta(0)$-hyperideals of $G$. From $A \in \Sigma$ it follows that $\Sigma \neq \varnothing$. $\Sigma$ is a parially ordered set with respect to set inclusion relation. Assume that $A_1 \subseteq A_2 \subseteq \cdots$ is some chain in $\Sigma$. Clearly, $\cup_{i=1}^{\infty} A_i$ is a hyperideal of $G$. Assume that $g(x_1^n) \in \cup_{i=1}^{\infty} A_i$ for $x_1^n \in G$ such that $g(x_1^{i-1},1_G,x_{i+1}^n) \notin \delta(0)$ for some $i \in \{1,\cdots,n\}$. This means that there exists $t \in \mathbb{N}$ such that $g(x_1^n) \in A_t$. Since $A_t$ is an $n$-ary $\delta(0)$-hyperideal of $G$ and $g(x_1^{i-1},1_G,x_{i+1}^n) \notin \delta(0)$, we have $x_i \in A_t \subseteq \cup_{i=1}^{\infty}A_i$. Therefore $\cup_{i=1}^{\infty} A_i$ is an upper bound of the mentioned chain. By Zorn$^,$s lemma, there is an $n$-ary $\delta(0)$-hyperideal $\mathbb{A}$ which is maximal in $\Sigma$. Hence we conclude that $\mathbb{A}=\delta(0)$ by Theorem \ref{1/6}.  Thus $\delta(0)$ is  an $n$-ary prime hyperideal of $G$ by Remark \ref{1/3}.
\end{proof}
 
The concept of Krasner $(m,n)$-hyperring of fractions was defined in \cite{mah5}. Let $S$ a non-empty subset  of $G$. $S$  refers to  an $n$-ary multiplicative  subset of $G$, if $1 \in S$ and $g(x_1^n) \in S$ for $x_1,\cdots,x_n \in S$. Suppose that $\delta$ is a hyperideal expansion of a $G$, $S$ is an $n$-ary multiplicative  subset of $G$ and $A$  a hyperideal of $G$. Then $\delta_S$ is a hyperideal expansion of $S^{-1}G$ with $\delta_S(S^{-1}A)=S^{-1}\Big(\delta(A)\Big)$ \cite{mah6}. $S$ is said to be an $n$-ary $\delta(0)$-multiplicative  subset of $G$ if $G-\delta(0) \subseteq S$ and $g(x_1^{n-1},x) \in S$ for all  $x \in S$ and $g(x_1^{n-1},1_G) \in G-\delta(0)$.
\begin{theorem}
Let  $A$ be an $n$-ary $\delta(0_G)$-hyperideal of $(G,f,g)$ and  $S$  an $n$-ary multiplicative  subset of $G$ such that $S\cap A =\varnothing$. Then $S^{-1}A$ is an $n$-ary $\delta_S(0_{S^{-1}G})$-hyperideals of $(S^{-1}G,H,K)$.
\end{theorem}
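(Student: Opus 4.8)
The plan is to reduce everything to the defining property of $A$ by using the standard descriptions of the localized objects. First I would record the two facts that make the localization transparent. By the stated behaviour of $\delta_S$ we have $\delta_S(0_{S^{-1}G})=\delta_S(S^{-1}(0_G))=S^{-1}(\delta(0))$, and for a hyperideal $B$ of $G$ the membership $a/s\in S^{-1}B$ is equivalent to the existence of some $u\in S$ with $g(u,a,1_G^{(n-2)})\in B$. Applying the latter with $B=A$, the hypothesis $S\cap A=\varnothing$, and $a=1_G$ immediately gives $1_{S^{-1}G}\notin S^{-1}A$, so $S^{-1}A$ is a proper hyperideal.

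Next I would verify the defining condition of an $n$-ary $\delta_S(0_{S^{-1}G})$-hyperideal directly. Take $\xi_1^n\in S^{-1}G$, write $\xi_j=x_j/s_j$ with $x_j\in G$ and $s_j\in S$, and assume $K(\xi_1^n)\in S^{-1}A$ while $K(\xi_1^{i-1},1_{S^{-1}G},\xi_{i+1}^n)\notin\delta_S(0_{S^{-1}G})$ for some $i$. Since $K(\xi_1^n)=g(x_1^n)/g(s_1^n)$, the first assumption yields a witness $t\in S$ with $g(t,g(x_1^n),1_G^{(n-2)})\in A$. The non-membership $K(\xi_1^{i-1},1_{S^{-1}G},\xi_{i+1}^n)=g(x_1^{i-1},1_G,x_{i+1}^n)/g(s_1^{i-1},1_G,s_{i+1}^n)\notin S^{-1}(\delta(0))$ means that for every $u\in S$ one has $g(u,g(x_1^{i-1},1_G,x_{i+1}^n),1_G^{(n-2)})\notin\delta(0)$; specializing to $u=1_G\in S$ gives $g(x_1^{i-1},1_G,x_{i+1}^n)\notin\delta(0)$.

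The crux is to feed these data into the hypothesis on $A$. I would absorb the witness $t$ into the $i$-th coordinate: by commutativity and associativity of $g$ together with the scalar identity, $g(x_1^{i-1},g(t,x_i,1_G^{(n-2)}),x_{i+1}^n)=g(t,g(x_1^n),1_G^{(n-2)})\in A$, while the corresponding ``hat'' product $g(x_1^{i-1},1_G,x_{i+1}^n)$ is left unchanged by this modification and, by the previous paragraph, lies outside $\delta(0)$. Applying the fact that $A$ is an $n$-ary $\delta(0)$-hyperideal of $G$ to the tuple $(x_1,\ldots,x_{i-1},g(t,x_i,1_G^{(n-2)}),x_{i+1},\ldots,x_n)$ then forces $g(t,x_i,1_G^{(n-2)})\in A$. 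Since $t\in S$, the membership criterion gives $\xi_i=x_i/s_i\in S^{-1}A$, as required.

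I expect the only real friction to be bookkeeping rather than ideas: pinning down the exact equivalence describing membership in a localized hyperideal and in its complement, and checking that absorbing $t$ into coordinate $i$ leaves the hat-product $g(x_1^{i-1},1_G,x_{i+1}^n)$ literally unchanged so that the extracted non-membership still applies. The one genuinely load-bearing choice is using $u=1_G$ to convert the universally quantified localization condition into the plain condition $g(x_1^{i-1},1_G,x_{i+1}^n)\notin\delta(0)$ demanded by the definition of a $\delta(0)$-hyperideal.
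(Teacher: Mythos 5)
Your proof is correct and takes essentially the same route as the paper's: extract a witness $t\in S$ from $K(\xi_1^n)\in S^{-1}A$, absorb it into the $i$-th coordinate so that $g(x_1^{i-1},g(t,x_i,1_G^{(n-2)}),x_{i+1}^n)\in A$, apply the $\delta(0)$-hyperideal property of $A$, and push $g(t,x_i,1_G^{(n-2)})\in A$ back to $\xi_i\in S^{-1}A$. The only difference is one of care, not of method: you make explicit the two localization facts (the membership criterion for $S^{-1}B$ and $\delta_S(0_{S^{-1}G})=S^{-1}(\delta(0))$, specialized at $u=1_G$) that the paper invokes silently when it asserts $g(x_1^{i-1},1_G,x_{i+1}^n)\notin\delta(0_G)$.
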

\begin{proof}
Suppose that $K(\frac{x_1}{s_1},\cdots,\frac{x_n}{s_n}) \in S^{-1}A$ for $\frac{x_1}{s_1},\cdots,\frac{x_n}{s_n} \in S^{-1}G$ such that $K(\frac{x_1}{s_1},\cdots,\frac{x_{i-1}}{s_{i-1}},\frac{1_G}{1_G},\frac{x_{i+1}}{s_{i+1}},\cdots,\frac{x_n}{s_n}) \notin \delta_S(0_{S^{-1}G})$ for some $i \in \{1,\cdots,n\}$. Therefore $\frac{g(x_1^n)}{g(s_1^n)} \in S^{-1}A$ which implies $g(x_1^{i-1},g(s,x_i,1_G^{(n-2)}),x_{i+1}^n)=g(s,g(x_1^n),1_G^{(n-2)}) \in A$ for some $s \in S$. Since $A$ is an $n$-ary $\delta(0_G)$-hyperideal of $G$ and $g(x_1^{i-1},1_G,x_{i+1}^n) \notin \delta(0_G)$, we get the result that $g(s,x_i,1_G^{(n-2)}) \in A$. Hence we have $\frac{x_i}{s_i}=\frac{g(x_i,s,1_G^{(n-2)})}{g(s_i,s,1_G^{(n-2)})}=K(\frac{x_i}{s_i},\frac{s}{s},\frac{1_G}{1_G}^{(n-2)}) \in S^{-1}A$. Thus we conclude that $S^{-1}A$ is an $n$-ary $\delta_S(0_{S^{-1}G})$-hyperideals of $(S^{-1}G,H,K)$.
\end{proof}
\begin{theorem}
Let $A$ be a hyperideal of $G$. Then $A$ is an $n$-ary $\delta(0)$-hyperideal of $G$ if and only if $G-A$ is an $n$-ary $\delta(0)$-multiplicative subset of $G$.
\end{theorem}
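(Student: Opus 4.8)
The plan is to set $S = G - A$ and to observe that the two defining clauses of an $n$-ary $\delta(0)$-multiplicative subset are term-by-term reformulations of the $n$-ary $\delta(0)$-hyperideal condition. The only mechanical ingredient needed throughout is the commutativity of $g$, which I will use to slide the distinguished coordinate into the last argument of $g$. I would prove the two implications separately.

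For the forward direction, I would assume $A$ is an $n$-ary $\delta(0)$-hyperideal and check both clauses for $S = G-A$. The first clause, $G - \delta(0) \subseteq S$, is immediate from Theorem \ref{1}: since $A \subseteq \delta(0)$, we get $G - \delta(0) \subseteq G - A = S$. For the closure clause, I would fix $x \in S$ and $x_1^{n-1} \in G$ with $g(x_1^{n-1},1_G) \notin \delta(0)$, and argue by contradiction that $g(x_1^{n-1},x) \in S$. If instead $g(x_1^{n-1},x) \in A$, then reading this as $g$ applied to the $n$-tuple $(x_1,\ldots,x_{n-1},x)$ and choosing the distinguished index $i = n$, the hypothesis $g(x_1^{n-1},1_G) \notin \delta(0)$ is precisely the ``$1_G$-replacement'' condition in the definition of a $\delta(0)$-hyperideal, forcing $x \in A$, a contradiction. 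Hence $g(x_1^{n-1},x) \in S$, so $S$ is $\delta(0)$-multiplicative.

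For the converse, I would assume $S = G-A$ is a $\delta(0)$-multiplicative subset; since such a subset is non-empty, $A \neq G$, so $A$ is proper. Now suppose $g(x_1^n) \in A$ with $g(x_1^{i-1},1_G,x_{i+1}^n) \notin \delta(0)$ for some $i$, and suppose toward a contradiction that $x_i \notin A$, i.e. $x_i \in S$. Setting $y_j = x_j$ for $j < i$ and $y_j = x_{j+1}$ for $i \leq j \leq n-1$, the tuple $y_1^{n-1}$ is $x_1^n$ with $x_i$ deleted, and commutativity of $g$ gives $g(y_1^{n-1},1_G) = g(x_1^{i-1},1_G,x_{i+1}^n) \notin \delta(0)$. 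The closure clause, applied to $x_i \in S$, then yields $g(y_1^{n-1},x_i) \in S$; but $g(y_1^{n-1},x_i) = g(x_1^n)$ again by commutativity, so $g(x_1^n) \in S = G-A$, contradicting $g(x_1^n) \in A$. Therefore $x_i \in A$, and $A$ is an $n$-ary $\delta(0)$-hyperideal.

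I do not expect a genuine obstacle here, since the proof is essentially a dictionary translation between the two definitions. The only steps requiring care are the repeated appeals to commutativity of $g$ to realign the distinguished (or omitted) coordinate with the last argument, and the matching of the two ``boundary'' data: the inclusion $G-\delta(0) \subseteq S$ with the inclusion $A \subseteq \delta(0)$ supplied by Theorem \ref{1}, and properness of $A$ with non-emptiness of $S$.
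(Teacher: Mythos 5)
Your proof is correct and follows essentially the same route as the paper: the forward direction combines Theorem \ref{1} (giving $A \subseteq \delta(0)$, hence $G-\delta(0) \subseteq G-A$) with a contradiction argument for the closure clause, and the converse applies the closure clause to $x_i \in G-A$ to contradict $g(x_1^n) \in A$. Your extra care with the commutativity reindexing and with deducing properness of $A$ from non-emptiness of $G-A$ only makes explicit what the paper leaves implicit.
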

\begin{proof}
($\Longrightarrow$)Let $A$ be an $n$-ary $\delta(0)$-hyperideal of $G$. Therefore $A$ is contained in $\delta(0)$ by Theorem \ref{1}. Then we have $G-\delta(0) \subseteq G-A$. Let $x \in G-A$ and $g(x_1^{n-1},1_G) \in G-\delta(0)$. Suppose that $g(x_1^{n-1},x) \in A$. Hence we get $x \in A$ as $A$ is an $n$-ary $\delta(0)$-hyperideal of $G$ and $g(x_1^{n-1},1_G) \notin \delta(0)$. This is a contradiction. Therefore $g(x_1^{n-1},x) \in G-A$ which means $G-A$ is an $n$-ary $\delta(0)$-multiplicative subset of $G$.

($\Longleftarrow$) Let $G-A$ be an $n$-ary $\delta(0)$-multiplicative subset of $G$ for some hyperideal $A$ of $G$. Assume that $g(x_1^n) \in A$ for $x_1^n \in G$ such that $g(x_1^{i-1},1_G,x_{i+1}^n) \notin \delta(0)$ for some $i \in \{1,\cdots,n\}$. If  $x_i \notin A$, then we get $g(x_1^{i-1},x_i,x_{i+1}^n) \notin A$ as $G-A$ is an $n$-ary $\delta(0)$-multiplicative subset of $G$ and $g(x_1^{i-1},1_G,x_{i+1}^n) \in G-\delta(0)$. This is a contradiction. Thus $x_i \in A$. Consequently, $A$ is an $n$-ary $\delta(0)$-hyperideal of $G$.
\end{proof}
\begin{theorem}
Let  $A$ be a hyperideal of $G$ and  $S$  an $n$-ary $\delta(0)$-multiplicative  subset of $G$ such that $S\cap A =\varnothing$. Then there exists an $n$-ary $\delta(0)$-hyperideal $B$ such that $A \subseteq B$ and $B \cap S = \varnothing$.
\end{theorem}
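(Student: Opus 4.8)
The plan is to use Zorn's lemma to produce a hyperideal that is maximal among those containing $A$ and disjoint from $S$, and then to verify that such a maximal hyperideal is automatically an $n$-ary $\delta(0)$-hyperideal; the $\delta(0)$-multiplicative closure of $S$ is precisely the ingredient needed to make the verification go through. This mirrors the Zorn scaffolding already used in the proof of Theorem \ref{admits}, with the disjointness-from-$S$ condition replacing the role played there by the partial order alone.

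First I would set $\Sigma=\{I \ \vert \ I \text{ is a hyperideal of } G,\ A \subseteq I \text{ and } I \cap S = \varnothing\}$. Since $A \cap S = \varnothing$ we have $A \in \Sigma$, so $\Sigma \neq \varnothing$. Ordering $\Sigma$ by inclusion, any chain $\{I_\lambda\}$ has the union $\bigcup_\lambda I_\lambda$ as an upper bound: it is a hyperideal of $G$ containing $A$, and it stays disjoint from $S$ because each $I_\lambda$ is. By Zorn's lemma $\Sigma$ has a maximal element $B$, which by construction satisfies $A \subseteq B$ and $B \cap S = \varnothing$, so it only remains to show that $B$ is an $n$-ary $\delta(0)$-hyperideal.

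Suppose toward a contradiction that $g(x_1^n) \in B$ with $g(x_1^{i-1},1_G,x_{i+1}^n) \notin \delta(0)$ for some $i$, yet $x_i \notin B$. The hyperideal $B'=f(B,\langle x_i \rangle,0^{(m-2)})$ generated by $B \cup \{x_i\}$ then properly contains $B$ and still contains $A$, so maximality of $B$ in $\Sigma$ forces $B' \cap S \neq \varnothing$; choose $s \in B' \cap S$, say $s \in f\big(b,\ g(r,x_i,1_G^{(n-2)}),\ 0^{(m-2)}\big)$ with $b \in B$ and $r \in G$. Multiplying this membership through the $i$-th slot by the entries $x_j$ $(j\neq i)$ and using the distributivity axiom together with associativity and commutativity, one obtains
\[
g(x_1^{i-1},s,x_{i+1}^n) \in f\big(g(x_1^{i-1},b,x_{i+1}^n),\ g(r,g(x_1^n),1_G^{(n-2)}),\ 0^{(m-2)}\big).
\]
Here $g(x_1^{i-1},b,x_{i+1}^n)\in B$ because $b \in B$, and $g(r,g(x_1^n),1_G^{(n-2)}) \in B$ because $g(x_1^n) \in B$; since $B$ is closed under $f$, the entire right-hand side lies in $B$, whence $g(x_1^{i-1},s,x_{i+1}^n) \in B$. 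On the other hand, applying the $n$-ary $\delta(0)$-multiplicative property of $S$ to $s \in S$ with the $n-1$ entries $x_1,\dots,x_{i-1},x_{i+1},\dots,x_n$—whose product with $1_G$ equals $g(x_1^{i-1},1_G,x_{i+1}^n) \in G-\delta(0)$—gives $g(x_1^{i-1},s,x_{i+1}^n) \in S$, using commutativity to place $s$ in the $i$-th slot. This contradicts $B \cap S = \varnothing$, so $x_i \in B$ and $B$ is an $n$-ary $\delta(0)$-hyperideal.

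The main obstacle I anticipate is the bookkeeping in the distributivity step: one must first justify that $B'=f(B,\langle x_i \rangle,0^{(m-2)})$ really is the smallest hyperideal containing $B$ and $x_i$, then pin down the general form of its elements in a Krasner $(m,n)$-hyperring, and finally track the $1_G$-fillers and the single $0$-padded $f$-term so that the displayed membership and the reduction $g(x_1^{i-1},g(r,x_i,1_G^{(n-2)}),x_{i+1}^n)=g(r,g(x_1^n),1_G^{(n-2)})$ are airtight. Everything surrounding this computation—the nonemptiness of $\Sigma$, the chain argument, and the final contradiction—is routine.
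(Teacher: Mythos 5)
Your proposal is correct, and its overall scaffolding (Zorn's lemma on hyperideals disjoint from $S$, then a contradiction via the $\delta(0)$-multiplicative property) coincides with the paper's. The one genuine difference lies in how maximality is exploited. The paper never forms the hyperideal generated by $B \cup \{x_i\}$; instead it enlarges $B$ to the colon-type hyperideal
\[
E_{g(x_1^{i-1},1_G,x_{i+1}^n)}=\{x \in G \ \vert \ g\bigl(x,g(x_1^{i-1},1_G,x_{i+1}^n),1_G^{(n-2)}\bigr) \in B\},
\]
which contains $x_i$ (hence properly contains $B$), so maximality gives some $t$ in its intersection with $S$; then $g(x_1^{i-1},t,x_{i+1}^n)=g\bigl(t,g(x_1^{i-1},1_G,x_{i+1}^n),1_G^{(n-2)}\bigr)$ lies in $B$ \emph{by the very definition} of the colon set, and lies in $S$ by $\delta(0)$-multiplicativity --- no decomposition of $t$ and no distributivity computation are needed. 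Your route via $B'=f(B,\langle x_i\rangle,0^{(m-2)})$ works too, but it is the heavier of the two classical variants: you must verify that $B'$ is a hyperideal (your ``smallest hyperideal'' claim is actually unnecessary --- any hyperideal properly containing $B$ and avoiding nothing else suffices), pin down the form $s \in f\bigl(b,g(r,x_i,1_G^{(n-2)}),0^{(m-2)}\bigr)$ of its elements, and then push the remaining $x_j$'s through by distributivity; all of this bookkeeping, which you correctly flag as the main obstacle, simply disappears in the paper's colon-ideal formulation. On the other hand, your write-up is more careful on two points the paper glosses over: you build the containment $A \subseteq I$ into the poset $\Sigma$ (the paper takes a maximal element among \emph{all} hyperideals disjoint from $S$, which by itself need not lie above $A$), and you explicitly check that unions of chains are upper bounds.
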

\begin{proof}
Let $\Phi$ be the set of all hyperideals of $G$ that any hyperideal has an empty intersection with $S$. By the hypothesis, $A \in \Phi$ and so $\Phi \neq \varnothing$. $\Phi$ is a partially ordered set with respect to set inclusion relation. Then there exists a maximal element $B$ in $\Phi$ by Zorn$^{,}$s Lemma. We assume that $B$ is not an $n$-ary $\delta(0)$-hyperideal of $G$ and look for a contradiction. The assumption means that 
  $g(x_1^n) \in B$ for $x_1^n \in G$ and neither $x_i \in B$ nor $g(x_1^{i-1},1_G,x_{i+1}^n) \in \delta(0)$ for all $i \in \{1,\cdots,n\}$. Put $E_{g(x_1^{i-1},1_G,x_{i+1}^n)}=\{x \in G \ \vert \ g(x,g(x_1^{i-1},1_G,x_{i-1}^n),1_G^{(n-2)}) \in B\}$ for $i \in \{1,\cdots,n\}$. Since  $B \subsetneq E_{g(x_1^{i-1},1_G,x_{i+1}^n)}$, we conclude that $E_{g(x_1^{i-1},1_G,x_{i+1}^n)} \cap S \neq \varnothing$ by the maximality $B$. Let $t \in E_{g(x_1^{i-1},1_G,x_{i+1}^n)} \cap S$. Then $g(x_1^{i-1},t,x_{i+1}^n)=g(t,g(x_1^{i-1},1_G,x_{i+1}^n),1_G^{(n-2)}) \in B$. On the other hand, since $S$ is an $n$-ary $\delta(0)$-multiplicative subsete of $G$, $t \in S$ and  $g(x_1^{i-1},1_G,x_{i+1}^n) \in G-\delta(0)$, we get the result $g(x_1^{i-1},t,x_{i+1`}^n)=g(t,g(x_1^{i-1},1_G,x_{i+1}^n),1_G^{(n-1)}) \in S$ . This implies that $g(x_1^{i-1},t,x_{i-1}^n) \in B \cap S$ and so $B \cap S \neq \varnothing$. This is a contradiction because $B \in \Phi$. Consequently, $B$ is an $n$-ary $\delta(0)$-hyperideal of $G$. 
\end{proof}
In following theorem, we discuss hyperrings of which every proper hyperideal is an $n$-ary $\delta(0)$-hyperideal.
\begin{theorem}
We have the following equivalent statements:
\begin{itemize} 
\item[\rm{(i)}]~ $G$ is a local Krasner $(m,n)$-hyperring with maximal hyperideal $\delta(0)$.
\item[\rm{(ii)}]~ Every proper principal hyperideal of $G$ is an $n$-ary $\delta(0)$-hyperideal.
\item[\rm{(iii)}]~ Every proper hyperideal of $G$ is an $n$-ary $\delta(0)$-hyperideal.
\end{itemize} 
\end{theorem}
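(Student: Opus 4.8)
The plan is to establish the cycle (i) $\Rightarrow$ (iii) $\Rightarrow$ (ii) $\Rightarrow$ (i). The implication (iii) $\Rightarrow$ (ii) is immediate, since every principal hyperideal is in particular a hyperideal, so the property asserted in (iii) for all proper hyperideals specializes to the proper principal ones.

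For (i) $\Rightarrow$ (iii), I would first record the standard fact that in a local hyperring whose unique maximal hyperideal is $\delta(0)$, every element outside $\delta(0)$ is invertible: a non-invertible $a$ generates a proper hyperideal $\langle a\rangle$, which is contained in a maximal hyperideal, and by locality this forces $a\in\delta(0)$. Now let $A$ be a proper hyperideal and suppose $g(x_1^n)\in A$ with $u:=g(x_1^{i-1},1_G,x_{i+1}^n)\notin\delta(0)$. Then $u$ is invertible, say $g(u,b,1_G^{(n-2)})=1_G$. Using the associativity and commutativity of $g$ together with the scalar identity, via the rearrangement $g(x_1^n)=g(u,x_i,1_G^{(n-2)})$ already exploited in the proof of Theorem \ref{1/7}, I compute
\[
g\bigl(g(x_1^n),b,1_G^{(n-2)}\bigr)=g\bigl(x_i,g(u,b,1_G^{(n-2)}),1_G^{(n-2)}\bigr)=g(x_i,1_G^{(n-1)})=x_i.
\]
Since $g(x_1^n)\in A$ and $A$ is a hyperideal, the left-hand side lies in $A$, whence $x_i\in A$; thus $A$ is an $n$-ary $\delta(0)$-hyperideal, proving (iii).

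For (ii) $\Rightarrow$ (i), the first step is to show that every non-invertible element lies in $\delta(0)$: for such an $a$ the hyperideal $\langle a\rangle$ is proper, hence an $n$-ary $\delta(0)$-hyperideal by hypothesis, and Theorem \ref{1} gives $a\in\langle a\rangle\subseteq\delta(0)$. Conversely, since $\delta(0)$ is a proper hyperideal it can contain no invertible element, so $\delta(0)$ is \emph{exactly} the set of non-invertible elements of $G$. As every proper hyperideal consists entirely of non-invertible elements, each one is contained in $\delta(0)$; therefore $\delta(0)$ is the unique maximal hyperideal and $G$ is local, which is (i).

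The main obstacle I expect is the direction (ii) $\Rightarrow$ (i), and within it the need to know that $\delta(0)$ is a \emph{proper} hyperideal: if $\delta(0)=G$, the defining condition of an $n$-ary $\delta(0)$-hyperideal is vacuous and (ii) holds for trivial reasons even when $G$ is not local, so the equivalence should be read under the standing assumption $\delta(0)\neq G$ (equivalently $1_G\notin\delta(0)$). Granting this, the remaining work is bookkeeping: confirming that the step ``non-unit $\Rightarrow$ contained in a maximal hyperideal'' is available (it is, since $G$ carries the scalar identity $1_G$ and one may invoke Zorn's lemma), and that each $g$-rearrangement used above is legitimate in the $n$-ary associative--commutative setting.
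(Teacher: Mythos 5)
Your proposal is correct, but it organizes the equivalence as the reverse cycle (i) $\Rightarrow$ (iii) $\Rightarrow$ (ii) $\Rightarrow$ (i), whereas the paper proves (i) $\Rightarrow$ (ii) $\Rightarrow$ (iii) $\Rightarrow$ (i). The invertibility computation in your (i) $\Rightarrow$ (iii) is the same as the paper's (i) $\Rightarrow$ (ii) -- writing $g(x_1^n)=g(u,x_i,1_G^{(n-2)})$ with $u=g(x_1^{i-1},1_G,x_{i+1}^n)$ invertible and multiplying by the inverse -- only applied directly to an arbitrary proper hyperideal rather than a principal one. What you genuinely avoid is the paper's middle step (ii) $\Rightarrow$ (iii), which reduces a general hyperideal $A$ to the principal hyperideal $\langle g(x_1^n)\rangle$: since $\langle g(x_1^n)\rangle$ is proper, (ii) makes it a $\delta(0)$-hyperideal, so $x_i\in\langle g(x_1^n)\rangle\subseteq A$; in your cycle the trivial implication is (iii) $\Rightarrow$ (ii), so no such reduction is needed. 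The price is paid in your (ii) $\Rightarrow$ (i), which must argue element-wise (non-units generate proper principal hyperideals, Theorem \ref{1} places these inside $\delta(0)$, so $\delta(0)$ is exactly the set of non-invertible elements and hence the unique maximal hyperideal), while the paper's (iii) $\Rightarrow$ (i) simply applies Theorem \ref{1} to every proper hyperideal at once. Both routes rest on the same two pillars: invertibility of elements outside the unique maximal hyperideal, and Theorem \ref{1}. Finally, your observation that the theorem needs the standing assumption $\delta(0)\neq G$ is a genuine catch that the paper passes over: for $\delta=\delta_G$ one has $\delta(0)=G$, so (ii) and (iii) hold vacuously in any hyperring (for instance $\mathbb{Z}/6\mathbb{Z}$, which is not local), while (i) fails; the paper's final step ``$\delta(0)$ is the only maximal hyperideal'' silently presupposes that $\delta(0)$ is proper, and making this hypothesis explicit, as you do, is required for the statement to be true.
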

\begin{proof}
(i) $\Longrightarrow$ (ii)  Let $\langle x \rangle$ be a proper  hyperideal of $G$ for $x \in G$. Assume that $g(x_1^n) \in \langle x \rangle$ for $x_1^n \in G$ such that $g(x_1^{i-1},1_G,x_{i+1}^n) \notin \delta(0)$ for some $i \in \{1, \cdots,n\}$. Since $\delta(0)$ is the only maximal hyperideal of $G$ and  $g(x_1^{i-1},1_G,x_{i+1}^n) \notin \delta(0)$, we conclude that  $g(x_1^{i-1},1_G,x_{i+1}^n)$ is invertible. Then there exists $y \in R$ such that $1_G=g(y,g(x_1^{i-1},1_G,x_{i+1}^n),1_G^{(n-2)})$. Hence we have
$x_i=g(x_i,1_G^{(n-1)})=g(x_i, g(y,g(x_1^{i-1},1_G,x_{i+1}^n),1_G^{(n-2)}),1_G^{(n-2)})=g(y,g(x_1^n),1_G^{(n-2)})
 \in \langle x \rangle$. This shows that $\langle x \rangle$ is an $n$-ary $\delta(0)$-hyperideal.

(ii) $\Longrightarrow$ (iii) Assume that $A$ is a hyperideal of $G$. Let $g(x_1^n) \in A$ for $x_1^n \in G$ such that $g(x_1^{i-1},1_G,x_{i+1}^n) \notin \delta(0)$. By (ii), $\langle g(x_1^n) \rangle$ is an $n$-ary $\delta(0)$-hyperideal of $G$. From $g(x_1^n) \in \langle g(x_1^n) \rangle$ it follows that $x_i \in \langle g(x_1^n) \rangle$ as $g(x_1^{i-1},1_G,x_{i+1}^n) \notin \delta(0)$. Since $\langle g(x_1^n) \rangle \subseteq A$, we obtain $x_i \in A$ which implies $A$ is an $n$-ary $\delta(0)$-hyperideal of $G$.

(iii) $\Longrightarrow$ (i) Assume that $A$ is an arbitrary hyperideal of $G$. Hence $A$ is an $n$-ary $\delta(0)$-hyperideal of $G$ by (iii). Therefore we conclude that $A$ is contained in  $ \delta(0)$ by Theorem \ref{1}. Thus $\delta(0)$ is the only maximal hyperideal of $G$ and so $G$ is local.
\end{proof}
Suppose that $(G_1,f_1,g_1)$ and $(G_2,f_2,g_2)$ are two Krasner $(m,n)$-hyperrings. Let $\delta$ and $\gamma$ be hyperideal expansions of $G_1$ and $G_2$, respectively. Then the hyperring homomorphism $\psi: G_1 \longrightarrow G_2$ is said to be a $\delta \gamma$-homomorphism if $\delta(\psi^{-1}(A_2)) = \psi^{-1}(\gamma(A_2))$ for the hyperideal $A_2$ of $G_2$. 
It is remarkable that $\gamma(\psi(A_1)=\psi(\delta(A_1))$ for $\delta \gamma$-epimorphism $\psi$ and for hyperideal $A_1$ of $G_1$ with $ Ker (\psi) \subseteq A_1$  \cite{mah3}. 
Every homomorphism $\psi: G_1 \longrightarrow G _2$ is a $\delta_1 \gamma_1$-homomorphism.
\begin{theorem} \label{1/8} 
Assume that  $G_1$ and $G_2$ are two Krasner $(m,n)$-hyperrings and $\delta$ and $\gamma$ are hyperideal expansions of $G_1$ and $G_1$, respectively,  such that $\psi: G_1 \longrightarrow G_2$ is a $\delta \gamma$-homomorphism.
\begin{itemize} 
\item[\rm{(i)}]~  If $\psi$ is a monomorphism and $A_2$ is an $n$-ary $\gamma(0_{G_2})$-hyperideal of $G_2$, then $\psi^{-1} (A_2)$ is an $n$-ary $\delta(0_{G_1})$-hyperideal of $G_1$.
\item[\rm{(ii)}]~  If $\psi $ is an epimorphism and $A_1$ is an $n$-ary $\delta(0_{G_1})$-hyperideal of $G_1$ such that $Ker(\psi) \subseteq A_1$, then $\psi(A_1)$ is an n-ary $\gamma(0_{G_2})$-hyperideal of $G_2$.
\end{itemize} 
\end{theorem}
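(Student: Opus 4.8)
The plan is to verify the defining condition of an $n$-ary $\delta(0)$-hyperideal directly in each case, transporting both the relation $g(x_1^n)\in A$ and the non-membership $g(x_1^{i-1},1,x_{i+1}^n)\notin\delta(0)$ across $\psi$. Two facts drive everything. First, in a Krasner $(m,n)$-hyperring the multiplication $g$ is single-valued (it comes from an $n$-ary semigroup), so $\psi(g_1(x_1^n))=g_2(\psi(x_1),\ldots,\psi(x_n))$ is an honest element and the homomorphism axioms apply cleanly; moreover $\psi(1_{G_1})=1_{G_2}$, so $\psi$ carries a slot occupied by $1_{G_1}$ to one occupied by $1_{G_2}$. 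Second, the $\delta\gamma$-identity $\delta(\psi^{-1}(A_2))=\psi^{-1}(\gamma(A_2))$, specialized at $A_2=0_{G_2}$, yields $\psi^{-1}(\gamma(0_{G_2}))=\delta(Ker(\psi))$, which is the bridge between the two zero-expansions. Throughout I use that $\psi^{-1}(A_2)$ and (for an epimorphism) $\psi(A_1)$ are hyperideals, and that $Ker(\psi)\subseteq A_1$ implies $\psi^{-1}(\psi(A_1))=A_1$.

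For (i), properness of $\psi^{-1}(A_2)$ is immediate since $\psi(1_{G_1})=1_{G_2}\notin A_2$. As $\psi$ is a monomorphism, $Ker(\psi)=0_{G_1}$, so the bridge identity becomes the \emph{exact} equality $\delta(0_{G_1})=\psi^{-1}(\gamma(0_{G_2}))$. Now take $x_1^n\in G_1$ with $g_1(x_1^n)\in\psi^{-1}(A_2)$ and $g_1(x_1^{i-1},1_{G_1},x_{i+1}^n)\notin\delta(0_{G_1})$ for some $i$. Applying $\psi$ gives $g_2(\psi(x_1),\ldots,\psi(x_n))\in A_2$, while the exact equality converts the non-membership into $g_2(\psi(x_1),\ldots,1_{G_2},\ldots,\psi(x_n))\notin\gamma(0_{G_2})$. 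Since $A_2$ is an $n$-ary $\gamma(0_{G_2})$-hyperideal, $\psi(x_i)\in A_2$, that is $x_i\in\psi^{-1}(A_2)$, which is exactly what is needed.

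For (ii), properness of $\psi(A_1)$ follows from $\psi^{-1}(\psi(A_1))=A_1$, since $\psi(A_1)=G_2$ would force $A_1=G_1$. Given $g_2(y_1^n)\in\psi(A_1)$ with $g_2(y_1^{i-1},1_{G_2},y_{i+1}^n)\notin\gamma(0_{G_2})$, use surjectivity to lift each $y_j$ to some $x_j\in G_1$, choosing $1_{G_1}$ in the $i$-th slot; then $\psi(g_1(x_1^n))=g_2(y_1^n)\in\psi(A_1)$, so $g_1(x_1^n)\in\psi^{-1}(\psi(A_1))=A_1$. The delicate point, and the main obstacle, is the non-membership: from $\psi(g_1(x_1^{i-1},1_{G_1},x_{i+1}^n))\notin\gamma(0_{G_2})$ one only gets $g_1(x_1^{i-1},1_{G_1},x_{i+1}^n)\notin\psi^{-1}(\gamma(0_{G_2}))=\delta(Ker(\psi))$, and here $Ker(\psi)$ need not be trivial. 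The remedy is monotonicity of the expansion: $0_{G_1}\subseteq Ker(\psi)$ gives $\delta(0_{G_1})\subseteq\delta(Ker(\psi))$, so non-membership in the larger set forces $g_1(x_1^{i-1},1_{G_1},x_{i+1}^n)\notin\delta(0_{G_1})$. Applying the $\delta(0_{G_1})$-hyperideal property of $A_1$ then yields $x_i\in A_1$, whence $y_i=\psi(x_i)\in\psi(A_1)$, completing the argument.
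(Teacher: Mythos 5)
Your proof is correct and takes essentially the same route as the paper's: transport the product and the non-membership condition across $\psi$ via the $\delta\gamma$-identity specialized at $0_{G_2}$ (which gives $\delta(Ker(\psi))=\psi^{-1}(\gamma(0_{G_2}))$, hence the exact equality $\delta(0_{G_1})=\psi^{-1}(\gamma(0_{G_2}))$ in (i) and the inclusion $\delta(0_{G_1})\subseteq\psi^{-1}(\gamma(0_{G_2}))$ in (ii)), then apply the hyperideal hypothesis --- indeed you spell out, via monotonicity of $\delta$, the one step in (ii) that the paper asserts without justification, namely that $g_1(x_1^{i-1},1_{G_1},x_{i+1}^n)\notin\delta(0_{G_1})$. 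The only blemish is the phrase ``choosing $1_{G_1}$ in the $i$-th slot'' in (ii): read literally as the lift of $y_i$ it would break both $\psi(g_1(x_1^n))=g_2(y_1^n)$ and $y_i=\psi(x_i)\in\psi(A_1)$, so it must be understood (as the rest of your argument in fact does) as describing the auxiliary product $g_1(x_1^{i-1},1_{G_1},x_{i+1}^n)$, while $x_i$ itself remains a genuine lift of $y_i$.
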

\begin{proof}
(i) Let $g_1(x_1^n) \in \psi^{-1} (A_2)$ for $x_1^n \in G_1$ such that $g_1(x_1^{i-1},1_{G_1},x_{i+1}^n) \notin \delta(0_{G_1})$ for some $i \in \{1,\cdots,n\}$. This means that $\psi(g_1(x_1^n))=g_2(\psi(x_1),\cdots,\psi(x_n)) \in A_2$. Since  $\psi$ is a monomorphism and $g_1(x_1^{i-1},1_{G_1},x_{i+1}^n) \notin \delta(0_{G_1})$, we conclude that $g_2(\psi(x_1),\cdots,\psi(x_{i-1}),1_{G_2},\psi(x_{i+1}),\cdots,\psi(x_n))=\psi(g_1(x_1^{i-1},1_{G_1},x_{i+1}^n)) \notin \gamma(0_{G_2})$. Since $A_2$ is an $n$-ary $\gamma(0_{G_2})$-hyperideal of $G_2$, we get $\psi(x_i) \in A_2$ which means $x_i \in \psi^{-1}(A_2)$. Thus $\psi^{-1} (A_2)$ is an $n$-ary $\delta(0_{G_1})$-hyperideal of $G_1$.

(ii) Let  $g_2(y_1^n) \in \psi(A_1)$ for $y_1^n \in G_2$ such that $g_2(y_1^{i-1},1_{G_2},y_{i+1}^n) \notin \gamma(0_{G_2})$ for some $i \in \{1,\cdots,n\}$. Since $\psi$ is an epimorphism, then there exist $x_1^n \in G_1$ such that $\psi(x_1)=y_1,...,\psi(x_n)=y_n$. Thus
$\psi(g_1(x_1^n))=g_2(\psi(x_1),...,\psi(x_n))=g_2(y_1^n) \in \psi(A_1)$.
Since $Ker(\psi) $ is contained in $ A_1$, we have $g_1(x_1^n) \in A_1$.  Since $A_1$ is an $n$-ary $\delta(0_{G_1})$-hyperideal of $G_1$ and $g_1(x_1^{i-1},1_{G_1},x_{i+1}^n) \notin \delta(0_{G_1})$, we get the result that $x_i \in A_1$ which implies $y_i=\psi(x_i) \in \psi(A_1)$. This shows that $\psi(A_1)$ is an n-ary $\gamma(0_{G_2})$-hyperideal of $G_2$.
\end{proof}
Let $A$ be a hyperideal of $G$. The projection map $\pi :G \longrightarrow G/A$, defined by $a \mapsto f(a,A,0^{(m-2)})$ is a homomorphism by Theorem 3.2 in \cite{sorc1}. 
\begin{corollary} 
Let $A$ and $B$ be hyperideals of $G$ such that $A \subseteq B$. If $B$ is an $n$-ary $\delta(0)$-hyperideal of $G$, then $B/A$ is an $n$-ary $\delta_q(0)$-hyperideal of $G/A$. 
\end{corollary}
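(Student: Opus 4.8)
The plan is to deduce the corollary as a direct application of Theorem \ref{1/8}(ii) to the canonical projection $\pi \colon G \longrightarrow G/A$. First I would record the three structural facts about $\pi$ that match the hypotheses of that theorem: $\pi$ is surjective, hence an epimorphism; its kernel is exactly $A$, so that $\mathrm{Ker}(\pi) = A \subseteq B$ by the standing assumption $A \subseteq B$; and $\pi$ realises $B/A$ as the image $\pi(B)$. With $G_1 = G$, $G_2 = G/A$, $\psi = \pi$, $\delta$ the given expansion of $G$, and $\gamma = \delta_q$ the induced expansion on $G/A$, all the set-theoretic hypotheses of Theorem \ref{1/8}(ii) are then in place.

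The one genuine verification is that $\pi$ is a $\delta \delta_q$-homomorphism, i.e. that $\delta(\pi^{-1}(C)) = \pi^{-1}(\delta_q(C))$ for every hyperideal $C$ of $G/A$. Here I would use the correspondence that the hyperideals of $G/A$ are precisely the sets $J/A$ with $J$ a hyperideal of $G$ containing $A$, so that $\pi^{-1}(J/A) = J$; since $A \subseteq J \subseteq \delta(J)$ the set $\delta(J)/A$ is a well-defined hyperideal of $G/A$, and the induced expansion is the one given by $\delta_q(J/A) = \delta(J)/A$. For this choice $\pi^{-1}(\delta_q(J/A)) = \delta(J) = \delta(\pi^{-1}(J/A))$, which is exactly the $\delta \delta_q$-homomorphism condition. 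In particular, taking $C = 0_{G/A} = A/A$ gives $\pi^{-1}(\delta_q(0_{G/A})) = \delta(A)$, which is what identifies the target expansion at the zero hyperideal.

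Granting this, the conclusion is immediate: $B$ is an $n$-ary $\delta(0_G)$-hyperideal of $G$ and $\mathrm{Ker}(\pi) \subseteq B$, so Theorem \ref{1/8}(ii) yields that $\pi(B) = B/A$ is an $n$-ary $\delta_q(0_{G/A})$-hyperideal of $G/A$, which is the assertion. I expect the main (indeed only) obstacle to be pinning down the definition of the induced expansion $\delta_q$ and checking the compatibility $\delta(\pi^{-1}(C)) = \pi^{-1}(\delta_q(C))$; once $\delta_q(J/A) = \delta(J)/A$ is adopted, this reduces to the clauses $J \subseteq \delta(J)$ and order-preservation in the definition of a hyperideal expansion, everything else being a formal transcription of Theorem \ref{1/8}(ii).
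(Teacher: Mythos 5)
Your proposal is correct and follows exactly the paper's route: the paper's proof is the one-liner ``apply Theorem \ref{1/8}(ii) to the epimorphism $\pi\colon G\longrightarrow G/A$,'' and your argument is precisely this application, with the additional (welcome) verification that $\mathrm{Ker}(\pi)=A\subseteq B$ and that $\pi$ is a $\delta\delta_q$-homomorphism under the natural quotient expansion $\delta_q(J/A)=\delta(J)/A$. You have simply made explicit the compatibility checks the paper leaves implicit.
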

\begin{proof}
Follows by applying Theorem \ref{1/8} (ii) to the epimorphism $\pi: G \longrightarrow G/A$.
\end{proof}
\begin{corollary} Assume that $B/A$ is an $n$-ary $\delta_q(0)$-hyperideal of $G/A$ where $A$ and $B$ are hyperideals of $G$ with  $A \subseteq B$.
\begin{itemize} 
\item[\rm{(i)}]~ If $A \subseteq \delta(0)$, then $B$ is an $n$-ary $\delta(0)$-hyperideal of $G$.
\item[\rm{(ii)}]~ If $A$ is an $n$-ary $\delta(0)$-hyperideal of $G$, then so is $B$.
\end{itemize} 
\end{corollary}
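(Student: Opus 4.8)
The plan is to treat both statements as pull-back results along the canonical projection $\pi\colon G\longrightarrow G/A$, $\pi(a)=f(a,A,0^{(m-2)})$, and to reduce (ii) to (i). First I would observe that (ii) is an immediate consequence of (i): if $A$ is an $n$-ary $\delta(0)$-hyperideal of $G$, then $A\subseteq\delta(0)$ by Theorem \ref{1}, so the hypothesis of (i) is satisfied and $B$ is an $n$-ary $\delta(0)$-hyperideal. Thus all the real content sits in part (i).

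For (i), the guiding idea is that $B=\pi^{-1}(B/A)$ (because $A\subseteq B$) and that $\pi$ is a $\delta\delta_q$-homomorphism with $\ker\pi=A$, so that $\pi^{-1}\big(\delta_q(0_{G/A})\big)=\delta\big(\pi^{-1}(0_{G/A})\big)=\delta(A)$; equivalently $\delta_q(0_{G/A})=\delta(A)/A$. Concretely, suppose $g(x_1^n)\in B$ with $g(x_1^{i-1},1_G,x_{i+1}^n)\notin\delta(0)$ for some $i$. Applying the homomorphism $\pi$ gives $g(\pi(x_1),\dots,\pi(x_n))=\pi(g(x_1^n))\in\pi(B)=B/A$. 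The plan is then to feed this into the $\delta_q(0)$-hyperideal property of $B/A$: for the chosen $i$ one has $\pi(g(x_1^{i-1},1_G,x_{i+1}^n))=g(\pi(x_1),\dots,\pi(x_{i-1}),1_{G/A},\pi(x_{i+1}),\dots,\pi(x_n))$, and this element lies in $\delta_q(0_{G/A})=\delta(A)/A$ if and only if $g(x_1^{i-1},1_G,x_{i+1}^n)\in\delta(A)$. Once I know it does not, the fact that $B/A$ is an $n$-ary $\delta_q(0)$-hyperideal forces $\pi(x_i)\in B/A$, i.e. $x_i\in\pi^{-1}(B/A)=B$, which is exactly what is needed.

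The step carrying all the weight — and the one I expect to be the main obstacle — is passing from the given $g(x_1^{i-1},1_G,x_{i+1}^n)\notin\delta(0)$ to $g(x_1^{i-1},1_G,x_{i+1}^n)\notin\delta(A)$ used above; equivalently, proving $\delta(A)=\delta(0)$ under the hypothesis $A\subseteq\delta(0)$. Here monotonicity of the expansion supplies the two inclusions $\delta(0)\subseteq\delta(A)$ (from $0\subseteq A$) and $\delta(A)\subseteq\delta(\delta(0))=\delta^2(0)$ (from $A\subseteq\delta(0)$), so the equality $\delta(A)=\delta(0)$ follows once $\delta^2(0)=\delta(0)$, which is the case for the expansions in play (indeed $\delta_0,\delta_1,\delta_G,\delta_M$ all satisfy $\delta(\delta(0))=\delta(0)$). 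I would isolate this as a short lemma, after which the pull-back argument closes part (i), and with it part (ii). This is also precisely why one cannot merely cite Theorem \ref{1/8}(i): the projection $\pi$ is not a monomorphism, and the role played there by injectivity — namely $\psi^{-1}(\gamma(0))=\delta(0)$ — is taken over here by the identity $\delta(A)=\delta(0)$.
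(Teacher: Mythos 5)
Your proposal follows the same route as the paper's own proof: both push the data into the quotient along $\pi(a)=f(a,A,0^{(m-2)})$, apply the $\delta_q(0)$-hyperideal hypothesis to the cosets, pull back via $B=\pi^{-1}(B/A)$, and obtain (ii) from (i) through Theorem \ref{1} exactly as you do. The genuine difference is that you make explicit, and then justify, the one step the paper leaves unargued. The paper simply asserts that $g(x_1^{i-1},1_G,x_{i+1}^n)\notin\delta(0)$ entails $f(g(x_1^{i-1},1_G,x_{i+1}^n),A,0^{(m-2)})\notin\delta_q(0_{G/A})$. Since $\delta_q(0_{G/A})=\delta(A)/A$ (this is forced by the $\delta\delta_q$-homomorphism property of $\pi$ used in the preceding corollary), that assertion amounts to $g(x_1^{i-1},1_G,x_{i+1}^n)\notin\delta(A)$, and monotonicity only gives $\delta(0)\subseteq\delta(A)$ --- the useless direction. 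Your lemma $\delta(A)=\delta(0)$, derived from $A\subseteq\delta(0)$ together with $\delta^2(0)=\delta(0)$, is precisely the missing ingredient, and your closing remark about why Theorem \ref{1/8}(i) cannot simply be cited (the projection is not a monomorphism, so $\pi^{-1}(\delta_q(0_{G/A}))$ is $\delta(A)$, not $\delta(0)$) is the correct diagnosis of where the weight of the proof sits.

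Be aware that your argument establishes the corollary only under the extra idempotence hypothesis $\delta^2(0)=\delta(0)$, which is absent from the statement; but this is a defect of the statement, not of your proof, since some such hypothesis cannot be dropped. For instance, view $G=\mathbb{Z}/8\mathbb{Z}$ as a Krasner $(2,2)$-hyperring and define an expansion by $\delta(0)=\langle 4\rangle$, $\delta(\langle 4\rangle)=\delta(\langle 2\rangle)=\langle 2\rangle$, $\delta(G)=G$. Taking $A=\langle 4\rangle\subseteq\delta(0)$ and $B=\langle 2\rangle$, the hyperideal $B/A=\delta(A)/A=\delta_q(0_{G/A})$ is prime in $G/A\cong\mathbb{Z}/4\mathbb{Z}$, hence an $n$-ary $\delta_q(0)$-hyperideal, yet $B$ is not an $n$-ary $\delta(0)$-hyperideal of $G$: $g(1,2)=2\in B$ and $2\notin\delta(0)$, while $1\notin B$. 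So the trade-off is clear: the paper's proof is shorter because it glides over the failing implication, while your version --- the same skeleton plus the isolated lemma, whose hypothesis holds for all the expansions the paper actually uses ($\delta_0,\delta_1,\delta_G,\delta_M$) --- is the argument that actually closes.
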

\begin{proof}
(i) Let $g(x_1^n) \in B$ for $x_1^n \in G$ such that  $g(x_1^{i-1},1_G,x_{i+1}^n) \notin \delta(0)$ for some $i \in \{1,\cdots,n\}$. So  $g(f(x_1,A,0^{(m-2)}),\cdots,f(x_n,A,0^{(m-2)}))=f(g(x_1^n),A,0^{(m-2)}) \in B/A$.  Since  $B/A$ is an $n$-ary $\delta_q(0)$-hyperideal of $G/A$ and 

$g(f(x_1,A,0^{(m-2)}),\cdots,f(x_{i-1},A,0^{(m-2)}),f(1_G,A,0^{(m-2)}),f(x_{i+1},A,0^{(m-2)}),$

$\hspace{2.9cm}\cdots,f(x_n,A,0^{(m-2)}))$

$\hspace{1.5cm}=f(g(x_1^{i-1},1_G,x_{i+1}^n),A,0^{(m-2)})$

$ \hspace{1.5cm} \notin \delta_q(0),$\\
we get the result that $f(x_i,A,0^{(m-2)}) \in B/A$ which means $x_i \in B$. This implies that $B$ is an $n$-ary $\delta(0)$-hyperideal of $G$.

(ii) Since $A$ is an $n$-ary $\delta(0)$-hyperideal of $G$, we conclude that $A$ is contained in $\delta(0)$ by Theorem \ref{1}. Therefore the claim is clear by (i).
\end{proof}
Assume that $(G_1, f_1, g_1)$ and $(G_2, f_2, g_2)$ are two commutative Krasner $(m,n)$-hyperrings such that $1_{G_1}$ and $1_{G_2}$ are scalar identities of $G_1$ and $G_2$, respectively. Then 
 $(G_1 \times G_2, f_1\times f_2 ,g_1 \times g_2 )$ endowed with the following $m$-ary hyperoperation
$f_1\times f_2 $ and $n$-ary operation $g_1 \times g_2$ is a Krasner $(m,n)$-hyperring.
\[f_1 \times f_2\Big((a_1,b_1),\cdots,(a_m,b_m)\Big) = \Big\{(a,b) \ \vert \ \ a \in f_1(a_1^m), b \in f_2(b_1^m)\Big \}\]
$\hspace{1.2cm}
g_1 \times g_2 \Big((x_{1}, y_{1}),\cdots,(x_n,y_n)\Big) =\Big(g_1(x_1^n),g_2(y_1^n)\Big) $,\\
for all $a_1^n,x_1^m \in G_1$ and $b_1^n,y_1^m \in G_2$ \cite{car}. 
\begin{theorem} 
Let $\delta_1$ and $\delta_2$ be hyperideal expansions of the Krasner $(m,n)$-hyperring $G_1$ and $G_2$, respectively,  such that $\delta(0_{G_1 \times G_2})=\delta_1(0_{G_1}) \times \delta_2(0_{G_2})$ and $A_1$  a proper hyperideal of $G_1$. If $A_1 \times G_2$ is an $n$-ary $\delta(0_{G_1 \times G_2})$-hyperideal of $G_1 \times G_2$, then $A_1$ is an $n$-ary $\delta_1(0_{G_1})$-hyperideal of $G_1$.
\end{theorem}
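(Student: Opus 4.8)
The plan is to transport an arbitrary instance of the defining condition for $A_1$ in $G_1$ up to the product hyperring $G_1 \times G_2$, where the hypothesis that $A_1 \times G_2$ is an $n$-ary $\delta(0_{G_1 \times G_2})$-hyperideal becomes available. So suppose $x_1^n \in G_1$ satisfy $g_1(x_1^n) \in A_1$ while $g_1(x_1^{i-1},1_{G_1},x_{i+1}^n) \notin \delta_1(0_{G_1})$ for some $i \in \{1,\cdots,n\}$; the goal is to deduce $x_i \in A_1$.

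First I would lift each $x_j$ to the element $(x_j,1_{G_2}) \in G_1 \times G_2$ and compute the two relevant $n$-ary products. Using the componentwise definition of $g_1 \times g_2$ together with the scalar identity $g_2(1_{G_2}^{(n)})=1_{G_2}$, one obtains $g_1 \times g_2\big((x_1,1_{G_2}),\cdots,(x_n,1_{G_2})\big)=(g_1(x_1^n),1_{G_2})$, which lies in $A_1 \times G_2$ because $g_1(x_1^n) \in A_1$. Inserting the identity $(1_{G_1},1_{G_2})$ in the $i$-th slot yields, in the same way, $(g_1(x_1^{i-1},1_{G_1},x_{i+1}^n),1_{G_2})$.

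The key step is to verify that this latter element avoids $\delta(0_{G_1 \times G_2})=\delta_1(0_{G_1}) \times \delta_2(0_{G_2})$. Since membership in a cartesian product of hyperideals is componentwise and the first coordinate $g_1(x_1^{i-1},1_{G_1},x_{i+1}^n)$ is assumed to lie outside $\delta_1(0_{G_1})$, the pair indeed fails to belong to the product expansion. Applying the defining property of the $n$-ary $\delta(0_{G_1 \times G_2})$-hyperideal $A_1 \times G_2$ to the relation $g_1 \times g_2\big((x_1,1_{G_2}),\cdots,(x_n,1_{G_2})\big) \in A_1 \times G_2$ then forces $(x_i,1_{G_2}) \in A_1 \times G_2$, that is, $x_i \in A_1$. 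As $A_1$ is proper by hypothesis, it follows that $A_1$ is an $n$-ary $\delta_1(0_{G_1})$-hyperideal of $G_1$.

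I expect no genuine obstacle here: the only point requiring care is the transfer of the non-membership condition, and this is immediate from the coordinatewise description of $\delta_1(0_{G_1}) \times \delta_2(0_{G_2})$, the first coordinate serving as the witness. Everything else is a direct computation exploiting the scalar identities $1_{G_1}$ and $1_{G_2}$, so the argument amounts to reflecting the product structure back onto the first factor.
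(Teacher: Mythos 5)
Your proposal is correct and follows essentially the same route as the paper's own proof: lift the elements $x_j$ to $(x_j,1_{G_2})$ in $G_1\times G_2$, compute the products componentwise, use that membership in $\delta_1(0_{G_1})\times\delta_2(0_{G_2})$ is coordinatewise to transfer the non-membership hypothesis, and apply the $\delta(0_{G_1\times G_2})$-hyperideal property of $A_1\times G_2$ to conclude $x_i\in A_1$. The only difference is presentational (you argue in the direct form of the definition, while the paper phrases the same step as a disjunction), so there is nothing substantive to add.
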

\begin{proof}
Assume that $g_1(x_1^n) \in A_1$ for $x_1^n \in G_1$. This means that 
\[g_1 \times g_2\Big((x_1,1_{G_2}),\cdots,(x_n,1_{G_2})\Big)=\Big(g_1(x_1^n),1_{G_2}\Big) \in A_1 \times G_2.\]
 Since  $A_1 \times G_2$ is an $n$-ary $\delta(0_{G_1 \times G_2})$-hyperideal of $G_1 \times G_2$, we conclude that either 
 $(x_i,1_{G_2}) \in A_1 \times G_2$ or 
 
 $g_1 \times g_2\Big((x_1,1_{G_2}),\cdots,(x_{i-1},1_{G_2}),(1_{G_1},1_{G_2})(x_{i+1},1_{G_2}),\cdots,(x_n,1_{G_2})\Big)$
 
 $\hspace{0.8cm}=\Big( g_1(x_1^{i-1},1_{G_1},x_{i+1}^n),1_{G_2}\Big)$
 
 $\hspace{0.8cm} \in \delta(0_{G_1 \times G_2})$\\
 for some $i \in \{1,\cdots,n\}$. This implies that $x_i \in A_1$ or $g(x_1^{i-1},1_{G_1},x_{i+1}^n) \in \delta_1(0_{G_1})$. Consequently,  $A_1$ is an $n$-ary $\delta_1(0_{G_1})$-hyperideal of $G_1$.
\end{proof}
\section{$(s,n)$-absorbing  $\delta(0)$-hyperideals}
In this section, we present the concept of $(s,n)$-absorbing $\delta(0)$-hyperideals as a generalization of the  $n$-ary $\delta(0)$-hyperideals and a subclass of the $(s,n)$-absorbing $\delta$-primary hyperideals.
\begin{definition} 
Suppose that  $\delta$ is a hyperideal expansion of $G$ and $s \in \mathbb{Z}^+$.  A proper hyperideal $A$ of $G$ is said to be an $(s,n)$-absorbing $\delta(0)$-hyperideal if $g(x_1^{sn-s+1}) \in A$  for $x_1^{sn-s+1} \in G$, then either  $g(x_1^{(s-1)n-s+2}) \in A$ or a $g$-product of $(s-1)n-s+2$ of $x_i^,$s, other than $g(x_1^{(s-1)n-s+2})$, is in $\delta(0)$.
\end{definition}
Obviously, every $n$-ary $\delta(0)$-hyperideal is an $(s,n)$-absorbing $\delta(0)$-hyperideal and every $(s,n)$-absorbing $\delta(0)$-hyperideal is an $(s,n)$-absorbing $\delta$-primary hyperideal. However, these are different notions. 
\begin{example} 
Let us consider the hyperring $(G=[0,a],+,\cdot)$ where $0 < a \leq 1$, $``+"$ is defined by 
\[
p + q=
\begin{cases}
$[0,p]$, & \text{if $p =q$}\\
\{\max\{p,q\}\} & \text{if $p \neq q$}
\end{cases}.\]
and  $``\cdot"$ is the usual multiplication on real numbers. Then $([0,a],f,g)$ is a Krasner $(m,n)$-hyperrin  such that $f(p_1^m)=p_1+\cdots+p_m$ and $g(q_1^n)=q_1\cdots q_n$ for all $p_1^m,q_1^n \in [0,a]$. In the hyperring $0$ is an $(s,n)$-absorbing $\delta_1$-hyperideal. Now, let $a=1$. Then $A=[0,0.5]$ is a $(2,2)$-absorbing $\delta_1$-primary hyperideal. However, it is not $(2,2)$-absorbing $\delta_1(0)$-hyperideals. Take $0.6,0.9 \in G$. We have $g(0.6,0.9,0.9) \in A$, but neither $g(0.9^{(2)},1_G)$ and  $g(0.6,0.9,1,1_G)$ are in $A$ nor  in $\delta_1(0)$.
\end{example}
\begin{theorem}
Let $A$ be an $n$-ary $\delta(0)$-hyperideal of $G$. Then $A$ is a $(2,n)$-absorbing $\delta(0)$-hyperideal of $G$.
\end{theorem}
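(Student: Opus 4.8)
The plan is to unravel what the $(2,n)$-absorbing $\delta(0)$-hyperideal condition actually demands and then recognize it as a single instance of the defining property of an $n$-ary $\delta(0)$-hyperideal. For $s=2$ one has $sn-s+1 = 2n-1$ and $(s-1)n-s+2 = n$, so I must show: whenever $g(x_1^{2n-1}) \in A$ for $x_1^{2n-1} \in G$, then either $g(x_1^n) \in A$ or some $g$-product of $n$ of the $x_i$'s different from $g(x_1^n)$ lies in $\delta(0)$. The key observation is that, by associativity of $g$, we have $g(x_1^{2n-1}) = g(g(x_1^n), x_{n+1}, \ldots, x_{2n-1})$; hence, setting $z_1 = g(x_1^n)$ and $z_j = x_{n+j-1}$ for $j = 2, \ldots, n$, the hypothesis $g(z_1^n) = g(x_1^{2n-1}) \in A$ is exactly a membership statement that the $n$-ary $\delta(0)$-hyperideal property can act on.

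Applying that property at the first slot produces the required dichotomy. Concretely, if $g(1_G, x_{n+1}, \ldots, x_{2n-1}) = g(1_G, z_2, \ldots, z_n) \notin \delta(0)$, then since $A$ is an $n$-ary $\delta(0)$-hyperideal and $g(z_1^n) \in A$, the definition forces $z_1 = g(x_1^n) \in A$, which is precisely the first alternative. Otherwise $g(1_G, x_{n+1}, \ldots, x_{2n-1}) \in \delta(0)$, and I must manufacture the second alternative.

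In that remaining case I would exhibit $g(x_1, x_{n+1}, \ldots, x_{2n-1})$ as the desired product of $n$ of the $x_i$'s. Since $g$ is single-valued (as $(G,g)$ is an $n$-ary semigroup) with scalar identity $1_G$, associativity together with the identity law gives $g(x_1, x_{n+1}, \ldots, x_{2n-1}) = g(x_1, g(1_G, x_{n+1}, \ldots, x_{2n-1}), 1_G^{(n-2)})$. Because $g(1_G, x_{n+1}, \ldots, x_{2n-1}) \in \delta(0)$ and $\delta(0)$ is a hyperideal (so it absorbs $g$-products), the right-hand side lies in $\delta(0)$. Its index set $\{1, n+1, \ldots, 2n-1\}$ differs from $\{1, \ldots, n\}$, so it is a $g$-product of $n$ of the $x_i$'s other than $g(x_1^n)$, completing this case and hence the proof.

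I expect the main obstacle to be bookkeeping rather than conceptual. One must verify that the flattening $g(x_1^{2n-1}) = g(g(x_1^n), x_{n+1}, \ldots, x_{2n-1})$ and the rewriting $g(x_1, x_{n+1}, \ldots, x_{2n-1}) = g(x_1, g(1_G, x_{n+1}, \ldots, x_{2n-1}), 1_G^{(n-2)})$ are legitimate consequences of $n$-ary associativity, commutativity, and the scalar identity, and that the selected product genuinely occupies $n$ distinct variable slots and is formally distinct from $g(x_1^n)$. Once these identities are pinned down, the argument is a single clean application of Theorem~\ref{1/7} (or directly the definition) combined with the absorption property of the hyperideal $\delta(0)$.
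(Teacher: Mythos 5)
Your proof is correct and takes essentially the same route as the paper: both decompose $g(x_1^{2n-1})=g(g(x_1^n),x_{n+1},\ldots,x_{2n-1})$, apply the $\delta(0)$-hyperideal property to get $g(x_1^n)\in A$ or $g(1_G,x_{n+1},\ldots,x_{2n-1})\in\delta(0)$, and in the latter case use the absorption property of the hyperideal $\delta(0)$ to produce a $g$-product of $n$ of the $x_i$'s lying in $\delta(0)$. If anything, your write-up is slightly more careful than the paper's, which abbreviates $g(1_G,x_{n+1}^{2n-1})$ as ``$g(x_{n+1}^{2n-1})$'' and leaves the identity-padding and the flattening identity implicit.
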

\begin{proof}
Let $g(x_1^{2n-1}) \in A$ for $x_1^{2n-1} \in G$. Since $A$ is an $n$-ary  $\delta(0)$-hyperideal of $G$, we conclude that $g(x_1^n) \in A$ or $g(x_{n+1}^{2n-1}) \in \delta (0)$. In the second case, we get $g(x_i,x_{n+1}^{2n-1}) \in \delta (0)$, for  $i \in \{1,\cdots,n\}$ because  $\delta(0)$ is a hyperideal of $G$. Thus $A$ is $(2,n)$-absorbing $\delta(0)$-hyperideal of $G$.
\end{proof}
\begin{theorem}
Let $A$ be an $(s,n)$-absorbing $\delta(0)$-hyperideal of $G$. Then $A$ is a $(k,n)$-absorbing $\delta(0)$-hyperideal for $k>n$. 
\end{theorem}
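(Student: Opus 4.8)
The plan is to establish the result by a one-step induction on the absorbing index: I will show that if $A$ is an $(s,n)$-absorbing $\delta(0)$-hyperideal then it is an $(s+1,n)$-absorbing $\delta(0)$-hyperideal, and since the argument below works verbatim with $s$ replaced by any $j\ge s$, iterating the implication yields the $(k,n)$-absorbing $\delta(0)$-property for every $k>s$. Throughout write $r=sn-s+1=s(n-1)+1$, so that an $(s,n)$-absorbing condition concerns $r$ factors while an $(s+1,n)$-absorbing condition concerns $r+(n-1)$ factors with target sub-product length exactly $r$. So I fix $x_1,\dots,x_{r+n-1}\in G$ with $g(x_1^{r+n-1})\in A$; the goal is to show that either $g(x_1^{r})\in A$ or some non-canonical $g$-product of $r$ of the $x_i$ lies in $\delta(0)$.

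The key device is to collapse the tail using associativity of the $n$-ary operation $g$: set $w=g(x_r,x_{r+1},\dots,x_{r+n-1})$, the product of the last $n$ entries, so that $g(x_1^{r+n-1})=g(x_1,\dots,x_{r-1},w)$ is a product of exactly $r$ elements lying in $A$. Applying the $(s,n)$-absorbing $\delta(0)$-hypothesis to the $r$-element list $x_1,\dots,x_{r-1},w$ (legitimate since $g$ is commutative and associative) produces two cases, governed by $q=(s-1)n-s+2=(s-1)(n-1)+1$, and one records once and for all the arithmetic identities $q+(n-1)=r$ and $n+(q-1)=r$.

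In the first case the canonical sub-product $g(x_1^{q})$ lies in $A$; since $q\le r-1$ it does not involve $w$, and because $A$ is a hyperideal and $g$ is single-valued, multiplying by $x_{q+1},\dots,x_{r}$ gives $g(x_1^{r})\in A$, which is the first alternative. In the second case a $g$-product $P$ of $q$ of the entries $x_1,\dots,x_{r-1},w$, other than $g(x_1^q)$, lies in $\delta(0)$. If $P$ does not involve $w$ it is a product of $q$ original factors with indices in $\{1,\dots,r-1\}$, and multiplying it (inside the hyperideal $\delta(0)$) by $x_{r+1},\dots,x_{r+n-1}$ yields a product of $r$ original factors in $\delta(0)$ that is non-canonical because it uses indices exceeding $r$. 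If $P$ does involve $w$, then expanding $w$ exhibits $P$ itself as a product of $n+(q-1)=r$ original factors, again non-canonical since it uses the index $r+n-1>r$; either way the second alternative holds, completing the inductive step.

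I expect the only real difficulty to be the bookkeeping: keeping track of which positions are used, applying generalized associativity correctly to regroup the iterated $g$-products, and guaranteeing in each branch that the exhibited product is genuinely distinct from $g(x_1^{r})$, which I secure by always including at least one factor of index greater than $r$. No deeper structural input is needed, since the closure of both $A$ and $\delta(0)$ under $g$-multiplication by arbitrary elements of $G$ supplies every extension used above.
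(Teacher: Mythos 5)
Your proof is correct and takes essentially the same route as the paper: both establish the inductive step from $(s,n)$ to $(s+1,n)$ by using associativity of $g$ to collapse a block of $n$ consecutive factors into a single element, applying the $(s,n)$-absorbing $\delta(0)$-hypothesis to the resulting list of $s(n-1)+1$ factors, and then expanding or padding the resulting sub-products, iterating to reach all larger $k$. The only differences are cosmetic: you collapse the last $n$ factors where the paper collapses an initial block, and your case analysis (whether or not the $\delta(0)$-product involves the collapsed element, plus the explicit check of non-canonicity via an index exceeding $r$) is spelled out more carefully than the paper's terse and typo-marred version of the same argument.
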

\begin{proof}
Let $g(x_1^{(s+1)n-(s+1)+1}) \in A$ for $x_1^{(s+1)n-(s+1)+1} \in G$. Let us consider  $g(x_1^{n+2})=x$. Since $A$ is $(s,n)$-absorbing $\delta(0)$-hyperideal,  we get the result that $g(x,\cdots,x_{(s+1)n-(s+1)+1}) \in A$ or a $g$-product of $sn-s+1$ of the $x_i^,$s except $g(x,\cdots,x_{(s+1)n-(s+1)+1})$ is in $\delta(0)$. This means that $g(x_i,x_{n+3}^{(s+1)n-(s+1)+1}) \in \delta(0)$ for  $i \in \{1,\cdots n+2\}$ which means $A$ is an $(s+1,n)$-absorbing $\delta(0)$-hyperideal. This shows that  $A$ is a $(k,n)$-absorbing $\delta(0)$-hyperideal for $k>n$.
\end{proof}

\section{weakly $(s,n)$-absorbing $\delta(0)$-hyperideals}  
In this section, we aim to discuss some of the  fundamental results concerning weakly $(s,n)$-absorbing $\delta(0)$-hyperideals as an expansion of the $(s,n)$-absorbing $\delta(0)$-hyperideals.
\begin{definition}
Assume that  $\delta$ is a hyperideal expansion of $G$ and $s \in \mathbb{Z}^+$. A proper hyperideal $A$ of $G$ is called  a weakly $(s,n)$-absorbing $\delta(0)$-hyperideal if $x_1^{sn-t+1} \in G$ and $0 \neq g(x_1^{sn-s+1}) \in A$  imply either  $g(x_1^{(s-1)n-s+2}) \in A$ or a $g$-product of $(s-1)n-s+2$ of $x_i^,$s, other than $g(x_1^{(s-1)n-s+2})$, is in $\delta(0)$.
\end{definition}

\begin{example} 
Cosider Krasner $(m,n)$-hyperring $(G/B,f,g)$ where $G={\mathbb{Z}_3[X,Y,Z]}$,  $B=\langle X^3Y^3Z^3\rangle$ and $f,g$ are ordinary addition and ordinary multiplication, respectively. In the hyperring, $A=rad(B/B)$ is  not a weakly $(1,3)$-absorbing $\delta_1(0_{G/B})$-hyperideal of $G/B$ because $0 \neq 2XYZ+B=(2X+B)(Y+B)(Z+B) \in A$ but none of the elements $(2X+B), (Y+B)$ and $(Z+B)$ are not in $\delta_1(0_{G/B})$.
\end{example}
\begin{proposition}
Assume that $\delta$ and $\gamma$ are hyperideal expansions of $G$ and $A$ is an $(s,n)$-absorbing $\gamma(0)$-hyperideal of $G$. Then  $A$ is a weakly $(s,n)$-absorbing $\gamma o \delta(0)$-hyperideal if and only if  $A$ is an $(s,n)$-absorbing $\gamma o\delta(0)$-hyperideal.
\end{proposition}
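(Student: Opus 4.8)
The plan is to prove the two implications separately, with the reverse direction being immediate and the forward direction being where the standing hypothesis does its work. First I would dispose of the ``if'' direction: every $(s,n)$-absorbing $\gamma o\delta(0)$-hyperideal is automatically a weakly $(s,n)$-absorbing $\gamma o\delta(0)$-hyperideal, because the weak definition quantifies only over tuples $x_1^{sn-s+1}$ with $g(x_1^{sn-s+1}) \neq 0$ and therefore demands strictly less than the unconditional definition. This requires no appeal to the assumption that $A$ is $(s,n)$-absorbing $\gamma(0)$.

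For the converse, suppose $A$ is a weakly $(s,n)$-absorbing $\gamma o\delta(0)$-hyperideal. I would take an arbitrary $x_1^{sn-s+1} \in G$ with $g(x_1^{sn-s+1}) \in A$ and split into two cases according to whether $g(x_1^{sn-s+1})$ vanishes. In the nonzero case $g(x_1^{sn-s+1}) \neq 0$, the weak hypothesis applies verbatim and delivers either $g(x_1^{(s-1)n-s+2}) \in A$ or a $g$-product of $(s-1)n-s+2$ of the $x_i$, other than $g(x_1^{(s-1)n-s+2})$, lying in $\gamma o\delta(0)$, which is exactly the conclusion sought.

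The substance lies in the zero case $g(x_1^{sn-s+1}) = 0$, which escapes the weak condition entirely. Here I invoke the standing hypothesis that $A$ is an $(s,n)$-absorbing $\gamma(0)$-hyperideal: since $0 = g(x_1^{sn-s+1}) \in A$, that condition yields either $g(x_1^{(s-1)n-s+2}) \in A$ or a $g$-product of $(s-1)n-s+2$ of the $x_i$, distinct from $g(x_1^{(s-1)n-s+2})$, in $\gamma(0)$. It then remains to upgrade membership in $\gamma(0)$ to membership in $\gamma o\delta(0)$, and this is the one spot where I would use the properties of hyperideal expansions: since $\delta$ is an expansion we have $0 \subseteq \delta(0)$, and monotonicity of $\gamma$ gives $\gamma(0) \subseteq \gamma(\delta(0)) = \gamma o\delta(0)$. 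Hence the offending $g$-product also lies in $\gamma o\delta(0)$, and the two cases together show $A$ is an $(s,n)$-absorbing $\gamma o\delta(0)$-hyperideal.

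The main (indeed only) obstacle is precisely this zero case: a tuple whose product is $0$ is invisible to the weak hypothesis, so without the $\gamma(0)$-absorbing assumption there is no control over it. The inclusion $\gamma(0) \subseteq \gamma o\delta(0)$ is the clean bridge reconciling the two cases, and it is what forces the equivalence to require that extra assumption rather than holding for an arbitrary hyperideal $A$.
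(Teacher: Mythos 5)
Your proof is correct and follows essentially the same route as the paper's: the reverse direction is immediate, and the forward direction splits on whether $g(x_1^{sn-s+1})$ is zero, handling the zero case via the $(s,n)$-absorbing $\gamma(0)$ hypothesis together with the inclusion $\gamma(0) \subseteq \gamma\circ\delta(0)$. Your explicit justification of that inclusion (monotonicity of $\gamma$ applied to $0 \subseteq \delta(0)$) is a detail the paper states without proof, but the argument is otherwise the same.
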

\begin{proof}
($\Longrightarrow$) Let $A$ be a weakly $(s,n)$-absorbing $\gamma o \delta(0)$-hyperideal of $G$. Suppose that $g(x_1^{sn-s+1}) \in A$ for $x_1^{sn-s+1} \in G$. If $g(x_1^{sn-s+1}) \neq 0$,  we have $g(x_1^{(s-1)n-s+2}) \in A$ or a $g$-product of $(s-1)n-s+2$ of $x_i^,$s, other than $g(x_1^{(s-1)n-s+2})$, is in $\gamma o \delta(0)$ as $A$ is a weakly $(s,n)$-absorbing $\gamma \circ \delta(0)$-hyperideal of $G$. Let us assume that $g(x_1^{sn-s+1}) = 0 \in A$ such that $g(x_1^{(s-1)n-s+2}) \notin A$. Since $A$ is an $(s,n)$-absorbing $\gamma(0)$-hyperideal of $G$, we get the result that a $g$-product of $(s-1)n-s+2$ of $x_i^,$s, other than $g(x_1^{(s-1)n-s+2})$, is in $\gamma(0)$. Since $\gamma(0) \subseteq \gamma o \delta(0)$,  we conclude that the $g$-product of $(s-1)n-s+2$ of $x_i^,$s is in $\gamma o \delta(0)$. Thus $A$ is an $(s,n)$-absorbing $\gamma o\delta(0)$-hyperideal.

($\Longleftarrow$) It is straightforward.
\end{proof}
We say that a proper hyperideal $A$ of $G$ is called a strongly weakly $(s,n)$-absorbing $\delta(0)$-hyperideal if $0 \neq g(A_1^{sn-s+1}) \subseteq A$ for any  hyperideal $A_1^{sn-n+1}$ of $G$ implies $g(A_1^{(s-1)n-s+2}) \subseteq A$ or a $g$-product of $(s-1)n-s+2$ of $A_i^,$s, other than $g(A_1^{(s-1)n-s+2})$, is a subset of  $\delta(0)$.  Moreover, we say that $(x_1^{s(n-1)+1})$ is  an $(s,n)$-$\delta(0)$-zero of a weakly $(s,n)$-absorbing $\delta(0)$-hyperideal $A$ if $g(x_1^{s(n-1)+1})=0$, $g(x_1^{(s-1)n-s+2}) \notin A$ and all $g$-products of $(s-1)n-s+2$ of $x_i^,$s, except $g(x_1^{(s-1)n-s+2})$  are not in $\delta(0)$. Note that the notation $x_1 ,\cdots, \widehat{x_i},\cdots, x_{sn-s+1}$ indicates that $i$-th term is excluded from the sequence $x_1^{sn-s+1}$.
\begin{theorem} \label{1/9}
Let  $(x_1^{s(n-1)+1})$ be   an $(s,n)$-$\delta(0)$-zero of a strongly weakly $(s,n)$-absorbing $\delta(0)$-hyperideal $A$. Then for $u \in \{1,\cdots,(s-1)n-s+2\}$ and $i_1,\cdots,i_u \in \{1,\cdots,s(n-1)+1\}$, $g(x_1,\cdots,\widehat{x_{i_1}},\cdots,\widehat{x_{i_2}},\cdots,\widehat{x_{i_u}},\cdots,x_{s(n-1)+1},A^{(u)})=0$ 
\end{theorem}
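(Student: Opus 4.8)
The plan is to argue by induction on $u$, the number of coordinates replaced by $A$, reducing every stage to the base case $u=1$ together with the two defining conditions of an $(s,n)$-$\delta(0)$-zero, namely $g(x_1^{(s-1)n-s+2})\notin A$ and the fact that every block of $(s-1)n-s+2$ of the $x_i$ other than the first avoids $\delta(0)$. The engine throughout is the distributive law (axiom (3)) applied to the iterated product $g$, which lets me convert an additive perturbation of one coordinate into an $f$-combination of sub-products whose summands I already understand.

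For the base case I fix $a\in A$ and an index $i$, and must show $c:=g(x_1,\ldots,\widehat{x_i},\ldots,x_{s(n-1)+1},a)=0$. Since $A$ is a hyperideal and $a\in A$, we have $c\in A$, so suppose for contradiction that $c\neq 0$. First I would replace $x_i$ by an arbitrary $t\in f(x_i,a,0^{(m-2)})$ and compute the full product of the perturbed tuple: distributing $g$ over this hypersum and using axiom (4) together with the identity $f(0,b,0^{(m-2)})=\{b\}$ (which holds because $0$ is the scalar identity of the canonical hypergroup $(G,f)$) gives $g(x_1,\ldots,\widehat{x_i},\ldots,x_{s(n-1)+1},t)=f(g(x_1^{s(n-1)+1}),c,0^{(m-2)})=f(0,c,0^{(m-2)})=\{c\}$ for every such $t$. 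Hence each perturbed tuple again has full $g$-product the nonzero element $c$ of $A$.

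Next I would feed each perturbed tuple into the weakly $(s,n)$-absorbing hypothesis: since its product is a nonzero element of $A$, either the product of a designated block of $(s-1)n-s+2$ entries lies in $A$, or some non-designated block lies in $\delta(0)$. Using commutativity of $g$ I am free to designate the original first block when $i$ lies in it and a block through $i$ otherwise. If a flagged block omits the coordinate $i$ it is an unchanged block of the zero, hence either the first block (so $\notin A$) or a non-first block (so $\notin\delta(0)$), and in both cases the corresponding alternative is excluded, a contradiction. The hard case, and the main obstacle, is when the flagged block $\gamma$ still contains the perturbed coordinate: here I expand $g_\gamma$ along $t\in f(x_i,a,0^{(m-2)})$ once more, so its flagged value lies in $f(\,u,\,v,\,0^{(m-2)})$, where $u$ is the value of $\gamma$ with $x_i$ reinstated and $v$ is its value with $a$ in place of $x_i$, the latter carrying the factor $a\in A$. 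Solving for $u$ by reversibility in the canonical $m$-ary hypergroup (axiom (1)(ii)--(iii)) and using that $A$ and $\delta(0)$ are closed under $f$ and additive inverses, I would transfer membership from the flagged value back to $u$, contradicting the zero condition attached to $\gamma$. This forces $c=0$ and settles $u=1$.

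For the inductive step I would assume the statement for $u$, fix arbitrary $a_1,\ldots,a_u\in A$ in $u$ of the chosen slots so that the induction hypothesis already makes the full product vanish, and then rerun the base-case perturbation in the remaining slot $i_{u+1}$; the point needing care is that the already-placed factors $a_1,\ldots,a_u$ be absorbed uniformly, which is precisely where I would invoke the \emph{strongly} weakly $(s,n)$-absorbing hypothesis in its ideal form rather than the elementwise one, controlling the relevant sub-products as subsets of $A$ or of $\delta(0)$. I expect the genuine difficulty to be concentrated in the third paragraph: because every additive manipulation must pass through the hyperoperation $f$, the perturbation $x_i\mapsto f(x_i,a,0^{(m-2)})$ yields a set of products rather than a single element, so the transfer of membership between perturbed and unperturbed sub-products must be effected by combining distributivity with reversibility, while simultaneously tracking which of the many blocks of size $(s-1)n-s+2$ is flagged and matching it against the asymmetric conditions of the zero (the single first block tested against $A$, all other blocks tested against $\delta(0)$).
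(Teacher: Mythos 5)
Your skeleton (induction on $u$; perturb a slot by $f(x_i,a,0^{(m-2)})$; distribute $g$; feed the perturbed tuple to the absorbing hypothesis; transfer membership back by reversibility) is exactly the paper's skeleton, but two of your steps do not go through as stated.

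In the base case, your choice of designated block is wrong precisely when the perturbed index $i$ lies outside the first block. You designate a block through $i$ and claim that any flagged block omitting $i$ gives a contradiction, being ``either the first block (so $\notin A$) or a non-first block (so $\notin\delta(0)$)''. But with your designation a block omitting $i$ can only be flagged by the $\delta(0)$-alternative, and if that block is the original first block $g(x_1^{(s-1)n-s+2})$ there is no contradiction at all: the definition of an $(s,n)$-$\delta(0)$-zero forbids the first block from lying in $A$, not from lying in $\delta(0)$. The asymmetry of the zero conditions dictates the opposite choice: always designate the original first block, whether or not it contains $i$. Then the $A$-alternative dies either outright (an unchanged first block is $\notin A$) or by your expansion-plus-reversibility transfer, and every $\delta(0)$-flagged block transfers back to a non-first block of the $x_j$'s, which the zero condition does exclude. (Your transfers into $\delta(0)$ also silently use $A\subseteq\delta(0)$; this is true, and the paper uses it too, but it must be recorded.)

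The more serious failure is the inductive step. You keep raw elements $a_1,\ldots,a_u\in A$ in $u$ slots and perturb only the slot $i_{u+1}$. Then the tuple you hand to the absorbing hypothesis contains actual elements of $A$, and since $A$ and $\delta(0)$ are hyperideals, every block through one of the $a_j$ automatically has its $g$-product in $A\subseteq\delta(0)$. Hence the disjunction in the definition is satisfied trivially by flagging such a block, and no contradiction can ever be extracted; passing to the ``ideal form'' of the strong hypothesis does not help, since a block containing the factor $A$ is just as trivially contained in $A$. What is missing is the paper's simultaneous perturbation: replace every $a_j$ by $f(x_j,a_j,0^{(m-2)})$ at once, and use the induction hypothesis (for all values less than $u$) to annihilate the cross terms of the distributed product, so that the product of the perturbed tuple is still the single nonzero element $g(x_{u+1}^{s(n-1)+1},a_1^u)$ of $A$, while no perturbed entry is swallowed by $\delta(0)$ (if $f(x_j,a_j,0^{(m-2)})\subseteq\delta(0)$, reversibility would force $x_j\in\delta(0)$, against the zero conditions). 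With that replacement every flagged block expands and transfers back to a pure block of the $x_j$'s, and the base-case contradiction repeats.
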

\begin{proof}
We use the induction on $u$. Suppose that  $u=1$. Now we show that  $g(x_1,\cdots,\widehat{x_{i_1}}\cdots,x_{s(n-1)+1},A)=0$.  Let $g(x_1,\cdots,\widehat{x_{i_1}}\cdots,x_{s(n-1)+1},A) \neq 0$. Without loss of generality, we may assume that $g(x_2^{s(n-1)+1},A) \neq 0$. Then we obtain $g(x_2^{s(n-1)+1},a) \neq 0$ for some $a \in A$. Since  $ 0\neq g(x_2^{sn-s+1},f(x_1,a,0^{(m-2)}))=f(g(x_1^{s(n-1)+1}),g(x_2^{s(n-1)+1},a),0^{(m-2)}) \subseteq A$ and $A$ is a strongly weakly $(s,n)$-absorbing $\delta(0)$-hyperideal of $G$, we get the result that $\delta(0)$ contains a $g$-product containing $a$ of the terms $(s-1)n-s+2$ of $x_i^,$s. Then we may assume that $f(g(x_1,x_3^{s(n-1)+1}),g(a,x_3^{s(n-1)+1}),0^{(m-2)})=g(x_3^{s(n-1)+1},f(x_1,a,0^{(m-2)})) \subseteq \delta(0)$. Thus $g(x_1,x_3^{s(n-1)+1})  \in \delta(0)$ as $g(a,x_3^{s(n-1)+1})\in A \subseteq \delta(0)$. This is a contradiction. Therefore, $g(x_1,\cdots,\widehat{x_{i_1}}\cdots,x_{s(n-1)+1},A)= 0$. We suppose that the claim holds for all positive integers which are less than $u>1$ and show that it holds for $u$. Suppose that $g(x_1,\cdots,\widehat{x_{i_1}},\cdots,\widehat{x_{i_2}},\cdots,\widehat{x_{i_u}},\cdots,x_{s(n-1)+1},A^{(u)}) \neq 0$. Let us assume that $g(x_{u+1}^{s(n-1)+1},A^{(u)}) \neq 0$, namely, we omit $x_1^u$. Hence we get $ 0 \neq g(x_{u+1}^{s(n-1)+1},a_1^u) \in A$ for some $x_1^u \in A$. Then $0 \neq g(x_{u+1}^{s(n-1)+1}, f(x_1,a_1,0^{(m-2)}),\cdots,f(x_u,a_u,0^{(m-2)})) \subseteq A$, by induction hypothesis. Thus we obtain

$g(x_{u+1}^{s(n-1)+1},f(x_1,a_1,0^{(m-2)}), \cdots, \widehat{f(x_1,a_1,0^{(m-2)})_{i_1}},\cdots,\widehat{f(x_2,a_2,0^{(m-2)})_{i_2}},$

$\cdots,\widehat{f(x_{n-1},a_{n-1},0^{(m-2)})_{i_{n-1}}},\cdots,\widehat{f(x_u,a_u,0^{(m-2)})}) \subseteq \delta(0)$\\ or

$g(x_{u+1},\cdots,\widehat{x_{i_{u+1}}},\cdots,\widehat{x_{i_{u+2}}},\cdots, \widehat{x_{i_{u+(n-1)}}},\cdots, x_{s(n-1)+1},f(x_1,a_1,0^{(m-2)}),$

$\cdots,f(x_u,a_u,0^{(m-2)}) \subseteq \delta(0)$\\ for some $i \in \{1,\cdots,u\}$. Therefore we get the result that  for some $i \in \{1,\cdots,u\}$, $g(x_{u+1}^{s(n-1)+1},\cdots,x_n^t) \in \delta(0)$ or  $g(x_{u+n}^{s(n-1)+1},\cdots,x_1^u) \in \delta(0)$ or 

$g(x_{u+1},\cdots,\widehat{x_{i_{u+1}}},\cdots,\widehat{x_{i_{u+2}}},\cdots,\widehat{x_{i_{u+s}}},\cdots,x_{s(n-1)+1},f(x_1,a_1,0^{(m-2)}),$

$\cdots, \widehat{f(x_1,a_1,0^{(m-2)})_{i_{u+(s+1)}}}, \widehat{f(x_{n-1-s},a_{n-1-s},0^{(m-2)})_{i_{u+(n-1-s)}}},$

$ \cdots,f(x_u,a_u,0^{(m-2)})) \subseteq \delta(0),$\\ a contradiction because $(x_1^{s(n-1)+1})$ is an $(s,n)$-$\delta(0)$-zero of $A$. Consequently, $g(x_1,\cdots,\widehat{x_{i_1}},\cdots,\widehat{x_{i_2}},\cdots,\widehat{x_{i_u}},\cdots,x_{s(n-1)+1},A^{(u)})=0$.
\end{proof}
Now, we give a copy of Nakayama$^,$s lemma for strongly weakly $(s,n)$-absorbing $\delta(0)$-hyperideals.
\begin{theorem}
Assume that $A$ is a strongly weakly $(s,n)$-absorbing $\delta(0)$-hyperideal but is not $(s,n)$-absorbing $\delta(0)$-hyperideal. Then the following hold:
\begin{itemize}
\item[\rm{(i)}]~ $g(A^{(s(n-1)+1)})=0$
\item[\rm{(ii)}]~ If $M=k(A,1_G^{(n-2)},M)$ for some  $(m,n)$-hypermodule $M$ over $G$ , then $M=0$.
\end{itemize}
\end{theorem}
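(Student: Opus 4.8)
The plan is to realise the hypotheses through a concrete $(s,n)$-$\delta(0)$-zero, to read off (i) directly from the \emph{strongly} weakly $(s,n)$-absorbing condition applied to a judiciously chosen tuple of hyperideals, and then to feed (i) into the module relation to get (ii). Throughout write $N=s(n-1)+1=sn-s+1$ and $t=(s-1)n-s+2=(s-1)(n-1)+1$.

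First I would manufacture the zero. Since $A$ is not $(s,n)$-absorbing $\delta(0)$, there are $x_1^N\in G$ with $g(x_1^N)\in A$, with $g(x_1^{t})\notin A$, and with every $g$-product of $t$ of the $x_i$ other than $g(x_1^{t})$ lying outside $\delta(0)$. I claim $g(x_1^N)=0$. If not, set $B_i=\langle x_i\rangle$; then $0\neq g(x_1^N)\in g(B_1^N)\subseteq\langle g(x_1^N)\rangle\subseteq A$, so the strongly weakly $(s,n)$-absorbing hypothesis applies to $g(B_1^N)$ and yields either $g(B_1^{t})\subseteq A$ or a $g$-product of $t$ of the $B_i$ \emph{other than} $g(B_1^{t})$ contained in $\delta(0)$. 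Passing to the generators $x_i\in B_i$, the first alternative gives $g(x_1^{t})\in A$ and the second gives a non-designated $t$-fold $x$-product inside $\delta(0)$; both contradict the choice of the $x_i$. Hence $g(x_1^N)=0$ and $(x_1^N)$ is an $(s,n)$-$\delta(0)$-zero of $A$.

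For (i), put $B_i=f(\langle x_i\rangle,A,0^{(m-2)})$, the hyperideal sum $\langle x_i\rangle+A$, and expand $g(B_1^N)$ by distributivity (axiom~(3) of the hyperring). Every summand that uses a factor from $A$ is absorbed into $A$, while the single summand taking a generator from each $\langle x_i\rangle$ lies in $\langle g(x_1^N)\rangle=\langle 0\rangle=\{0\}$; hence $g(B_1^N)\subseteq A$. Since $A\subseteq B_i$ we have $g(A^{(N)})\subseteq g(B_1^N)\subseteq A$. Now suppose $g(A^{(N)})\neq 0$; then $0\neq g(B_1^N)\subseteq A$, so the strongly weakly $(s,n)$-absorbing property applied to $(B_1,\dots,B_N)$ returns either $g(B_1^{t})\subseteq A$ or a $t$-fold $g$-product of the $B_i$ other than $g(B_1^{t})$ inside $\delta(0)$. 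Because $x_i\in B_i$, the first alternative forces $g(x_1^{t})\in A$ and the second forces a non-designated $t$-fold $x$-product into $\delta(0)$, each contradicting that $(x_1^N)$ is an $(s,n)$-$\delta(0)$-zero. Therefore $g(A^{(N)})=0$, which is (i). (Alternatively one expands $g(f(x_1,a_1,0^{(m-2)}),\dots,f(x_N,a_N,0^{(m-2)}))$ elementwise and kills the low-order cross terms by Theorem~\ref{1/9}, but the ideal-theoretic route above avoids tracking the intermediate terms.) For (ii), assume $M=k(A,1_G^{(n-2)},M)$. Substituting this identity into its own right-hand side and collapsing the nested actions by the associativity axiom $k(a_1^{i-1},g(a_i^{i+n-1}),a_{i+m}^{n+m-2},x)=k(a_1^{n-1},k(a_m^{n+m-2},x))$ — using that $1_G$ is a scalar identity, so each composite $k(a,1_G^{(n-2)},k(a',1_G^{(n-2)},x))$ becomes a single action with scalar $g(a,a',1_G^{(n-2)})$ — and iterating until $N$ factors of $A$ have been absorbed, I would obtain $M=\bigcup_{c} k(c,1_G^{(n-2)},M)$ with $c$ ranging over the $s$-fold products $g(a_1^N)$, $a_i\in A$, i.e. $c\in g(A^{(N)})$. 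By (i) this set is $\{0\}$, and the axiom $0=k(r_1^{i-1},0,r_{i+1}^{n-1},x)$ gives $k(0,1_G^{(n-2)},M)=0$; hence $M=0$.

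The step I expect to be most delicate is the bookkeeping that makes both alternatives of the strongly weakly absorbing condition contradictory simultaneously: it hinges on aligning the distinguished product $g(B_1^{t})$ with the zero's distinguished product $g(x_1^{t})$, so that the ``other than'' clause guarantees any product forced into $\delta(0)$ is genuinely non-designated (hence outside $\delta(0)$ by the defining property of the zero), while the designated one is handled by the separate fact $g(x_1^{t})\notin A$. In (ii) the analogous care is the index juggling needed to collapse the nested module actions exactly onto a single scalar in $g(A^{(N)})$; once that is done, part~(i) closes the argument.
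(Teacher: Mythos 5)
Your proposal is correct, and on part (i) it takes a genuinely different route from the paper's. The paper first proves an annihilation lemma (Theorem \ref{1/9}) stating that for an $(s,n)$-$\delta(0)$-zero $(x_1^{s(n-1)+1})$ of $A$ all products $g(x_1,\ldots,\widehat{x_{i_1}},\ldots,\widehat{x_{i_u}},\ldots,x_{s(n-1)+1},A^{(u)})$ vanish; it then supposes $g(a_1^{s(n-1)+1})\neq 0$ with $a_i\in A$, expands the element-level product $g(f(x_1,a_1,0^{(m-2)}),\ldots,f(x_{s(n-1)+1},a_{s(n-1)+1},0^{(m-2)}))$ with that lemma controlling the cross terms, and derives a contradiction by tracking which mixed products of the $x_i$'s and $a_i$'s fall in $A$ or in $\delta(0)$. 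You instead apply the strongly weakly $(s,n)$-absorbing hypothesis directly to the hyperideals $B_i=f(\langle x_i\rangle,A,0^{(m-2)})$: the inclusion $g(B_1^{s(n-1)+1})\subseteq A$ is immediate (any cross term with an $A$-factor is absorbed by $A$, and the all-$x$ term is $\{0\}$ since $g(x_1^{s(n-1)+1})=0$), monotonicity $g(A^{(s(n-1)+1)})\subseteq g(B_1^{s(n-1)+1})$ replaces the paper's nonzero-ness bookkeeping, and $x_i\in B_i$ turns both alternatives of the absorbing condition into contradictions with the defining properties of the zero; your alignment worry is settled precisely because $B_i$ carries the same index as $x_i$, so a non-designated $B$-product projects onto a non-designated $x$-product. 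This bypasses Theorem \ref{1/9} altogether, works at the level of hyperideals where the \emph{strongly} hypothesis is actually formulated (the paper applies it to the sets $f(x_i,a_i,0^{(m-2)})$, which are not hyperideals), and also justifies the existence of the $(s,n)$-$\delta(0)$-zero via principal hyperideals, a step the paper asserts without argument. What the paper's route buys is the annihilation lemma itself, a statement of independent interest; what yours buys is a shorter, self-contained proof free of cross-term analysis. Your part (ii) is the paper's computation run in reverse---you fold $M=k(A,1_G^{(n-2)},M)$ into itself until the scalar set becomes $g(A^{(s(n-1)+1)})=\{0\}$, whereas the paper unfolds $k(g(A^{(s(n-1)+1)}),1_G^{(n-2)},M)=0$ back up to $M$---so there the two arguments coincide.
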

\begin{proof}
(i) Since $A$ is not $(s,n)$-absorbing $\delta(0)$-hyperideal, we obtain an $(s,n)$-$\delta(0)$-zero $(x_1^{s(n-1)+1})$ of $A$. Let us assume that  $g(A^{(s(n-1)+1)}) \neq 0$. Hence we have $g(a_1^{s(n-1)+1}) \neq 0$ for some $a_1^{s(n-1)+1} \in A$. Therefore we get the result that 

$\hspace{1cm}0 \neq g(f(x_1,a_1,0^{(m-2)}),\cdots,f(x_{s(n-1)+1},a_{s(n-1)+1},0^{(m-2)})) \subseteq A$ \\by Theorem \ref{1/9}. By the hypothesis, we conclude that $g(f(x_1,a_1,0^{(m-2)})^{(s-1)n-s+2}) \subseteq A$ or $\delta(0)$ contains a $g$-product of $(s-1)n-s+2$ of $f(x_i,a_i,0^{(m-2)})^,$s other that $g(f(x_1,a_1,0^{(m-2)})^{(s-1)n-s+2})$. In the first possibilty, we conclude that $A$ contains 
$f(\underbrace{g(x_1^{(s-1)n-s+2}),g(x_1^{u_1},a_1^{u_2}),g(a_1^{(s-1)n-s+2})}_u,0^{(m-u)})$ such that $u_1+u_2=(s-1)n-s+2$. It follows that $g(x_1^{(s-1)n-s+2}) \in A$.  But this is a contradiction
since $(x_1^{s(n-1)+1})$ is an $(s,n)$-$\delta(0)$-zero of $A$. In the second possibility, we may assume that 
$g(f(x_2,a_2,0^{(m-2)}),\cdots,f(x_{(s-1)n-s+3},a_{(s-1)n-s+3},0^{(m-2)}))$ is contained in $\delta(0)$. 
Thus we get the result that 
$f(\underbrace{g(x_2^{(s-1)n-s+3}),g(x_1^{u_1},a_2^{u_2}),g(a_1^{(s-1)n-s+2})}_u,0^{(m-u)})$ is contained in $\delta(0)$ such that $u_1+u_2=(s-1)n-s+2$ which implies $g(x_2^{(s-1)n-s+3}) \in \delta(0)$. It contradicts the fact that  $(x_1^{s(n-1)+1})$ is an $(s,n)$-$\delta(0)$-zero of $A$. Consequently, $g(A^{(s(n-1)+1)})=0$.

(ii) Let $M$ be a $(m,n)$-hypermodule over $G$ such that $M=k(A,1_G^{(n-1)},M)$. By (i), we have $k(g(A^{(s(n-1)+1)}),1_G^{(n-1)},M)=0$. This implies that 

$\hspace{2.5cm}0=k(g(A^{(s(n-1)+1)}),1_G^{(n-1)},M)$

$\hspace{2.8cm}=k(g(A^{(s(n-1))},1_G),A,1_G^{(n-2)},M)$

$\hspace{2.8cm}=k(g(A^{(s(n-1)}),1_G),1_G^{(n-1)},k(A,1_G^{(n-1)},M))$

$\hspace{2.8cm}=k(g(A^{(s(n-1))},1_G),1_G^{(n-1)},M)$

$\hspace{2.8cm}=\cdots$

$\hspace{2.8cm}=k(A,1_G^{(n-1)},k(A,1_G^{(n-1)},M))$

$\hspace{2.8cm}=k(A,1_G^{(n-1)},M)$

$\hspace{2.8cm}=M$
\end{proof}








\end{document}